\theoremstyle{plain}
\newtheorem{theorem}{Theorem}[section]
\newtheorem{prop}[theorem]{Proposition}
\newtheorem{lemma}[theorem]{Lemma}
\newtheorem{question}[theorem]{Question}
\newtheorem{cor}[theorem]{Corollary}
\newcommand{\lBr}{{}_\ell\mathrm{Br}}
\newtheorem{theoremintro}{Theorem}
\theoremstyle{definition}
\theoremstyle{remark}
\newtheorem{remark}[theorem]{Remark}
\newcommand{\sheaf}[1]{\mathscr{#1}}
\newcommand{\PP}{\sheaf{P}}
\newcommand{\XX}{\sheaf{X}}
\newcommand{\YY}{\sheaf{Y}}
\newcommand{\Z}{\mathbb Z}
\DeclareMathOperator{\Br}{\mathrm{Br}}
\newcommand{\et}{\mathrm{\acute{e}t}}
\begin{document}

\title[$u$-invariant and uniform bound ]
{ On the $u$-invariant  of  function fields of curves   over complete discretely 
valued fields}

\author[Parimala]{R.\ Parimala }
\address{Department of Mathematics \& Computer Science \\ %
Emory University \\ %
400 Dowman Drive~NE \\ %
Atlanta, GA 30322, USA}
\email{ parimala@mathcs.emory.edu, suresh@mathcs.emory.edu}
 
\author[Suresh]{V.\ Suresh}

\date{}

\begin{abstract}  Let $K$ be a complete discretely  valued field with residue field
$\kappa$. If char$(K) = 0$, char$(\kappa) = 2$ and $[\kappa : \kappa^2] = d$, we prove that 
there exists an integer $N$ depending on $d$ such that  the $u$-invariant of 
 any function field in one variable over $K$  is bounded by $N$.  The method of proof is via
 introducing  the notion of uniform boundedness for the $p$-torsion of the Brauer group of a field and relating  the
 uniform boundedness of the 2-torsion of the Brauer group to finiteness of the $u$-invariant. 
 We prove  that the 2-torsion of the Brauer group of function fields  in one variable over $K$ are uniformly
 bounded.  
\end{abstract} 
 
\maketitle

\def\ZZ{${\mathbf Z}$}
\def\ih{${\mathbf H}$}
\def\RR{${\mathbf R}$}
\def\IF{${\mathbf F}$}
\def\IP{${\mathbf P}$}

Let $K$ be a complete discretely  valued field with residue field
$\kappa$ and $F$ a function field in one variable over $K$. Suppose
char$(\kappa) \neq 2$. A bound for the $u$-invariant of $F$ in terms
of the $u$-invariant of function fields in one variable over $\kappa$
were obtained by Harbater-Hartmann-Krashen \cite{hhk:patching_qf_csa} using patching
techniques. This recovers the $u$-invariant of  function fields of
non-dyadic  $p$-adic curves (\cite{ps:uinv_8}).  Leep
(\cite{leep}), using results of Heath-Brown (\cite{heath-brown}),
proved that  the $u$-invariant of function
fields of all $p$-adic curves (including dyadic curves) is 8.  An alternate  proof for
function fields of dyadic curves is given in (\cite{ps:period_index}). In fact  more
generally we proved that if char$(K) = 0$, char$(\kappa) =2$ and $\kappa$ is perfect,
then $u(F) \leq 8$.  If $[\kappa : \kappa^2]$ is infinite it is easy to
construct anisotropic quadratic forms  over $K$ and hence over $F$ of arbitrarily large
dimension. The question remained open whether the $u$-invariant of $F$
is finite if char$(\kappa) = 2$ and  $[\kappa : \kappa^2]$ is
finite.  The aim of this article is to give an affirmative answer to
this question.  More precisely we prove the following (\ref{theorem:u_invariant})

\begin{theoremintro}
Let $K$ be complete discretely  valued field with residue field
$\kappa$ and $F$ a function field of a curve over $K$. 
Suppose that char$(K)  = 0$, char$(\kappa) = 2$ and $[\kappa : \kappa^2]$ is finite.  
Then there exists an integer $M$ which depends only on $[\kappa :
\kappa^2]$ such that  for any finite extension $F$ of $K(t)$, $u(F) \leq M$.  
\end{theoremintro}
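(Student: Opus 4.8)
The plan is to deduce the theorem from two separate statements, exactly as indicated in the abstract. Call a field $L$ of characteristic $\ne 2$ \emph{$N$-uniformly bounded} if every class in $\Brt(L')$ is a sum of at most $N$ quaternion classes for every finitely generated field extension $L'/L$; imposing this over all such $L'$, and not merely over $L$ itself, is what makes the notion stable under the residue constructions below. One then proves: \textbf{(A)} if $F$ has characteristic $0$, $\mathrm{cd}_2(F)\le n$, and is $N$-uniformly bounded, then $u(F)$ is bounded by an explicit function of $n$ and $N$; and \textbf{(B)} every finite extension $F$ of $K(t)$ has $\mathrm{cd}_2(F)\le n(d)$ and is $N(d)$-uniformly bounded, with $n(d),N(d)$ depending only on $d=[\kappa:\kappa^2]$. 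Because such an $F$ is the function field of a smooth projective geometrically integral curve over the algebraic closure $K'$ of $K$ in $F$ — itself a complete discretely valued field, finite over $K$, whose residue field $\kappa'$ still has $[\kappa':\kappa'^2]=d$ since a finite extension does not change this index — the class of fields in play is closed under the operations used, and it is enough to obtain bounds depending on $d$ alone; this is precisely why the uniform, rather than merely finite, version of boundedness is the right notion.

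For (A) I would use the filtration of the Witt ring $W(F)$ by the powers $I^mF$ of the fundamental ideal. By the Milnor conjecture (Orlov--Vishik--Voevodsky) the graded piece $I^mF/I^{m+1}F$ is $H^m(F,\Z/2)$, which vanishes for $m>n$, so $I^{n+1}F=0$ by the Arason--Pfister Hauptsatz. Peeling off Pfister forms one level at a time, the anisotropic dimension of a form in $I^mF$ is at most $2^m$ times the symbol length of $H^m(F,\Z/2)$ plus the bound for level $m+1$, so $u(F)$ is controlled once the symbol lengths of $H^1(F),\dots,H^n(F)$ are. Here $H^1$ trivially has symbol length $1$, $H^2=\Brt(F)$ is governed by the hypothesis, and for $3\le m\le n$ I would clear the residues of an $H^m$-class along the codimension one points of a regular model of $F$: the residues lie in the $H^{m-1}$ of fields that are again function fields of curves over $\kappa$ or over complete discretely valued fields with residue field of the same type, so Kato's residue sequences, together with the vanishing of $\Omega^j_\kappa$ for $j>\log_2 d$ (hence of the appropriate char-$2$ cohomology in degrees $>\log_2 d+1$), reduce everything by descending induction on $m$ to the $\Brt$ case handled by (B). Granted (B), step (A) is thus bookkeeping with symbol lengths.

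Step (B) is the heart of the matter. I would fix a regular proper flat model $\mathscr{X}\to\Spec\mathcal{O}_{K'}$ of the curve with function field $F$, after blow-ups arranging that the ramification divisor of a given $\alpha\in\Brt(F)$ has normal crossings, and apply field patching in the style of Harbater--Hartmann--Krashen: $\alpha$ is a sum of a bounded number of quaternion classes over $F$ once it is so over each of the finitely many overfields attached to the closed points, the irreducible components of the special fibre, and the branches, up to a controlled correction. This reduces the problem to a uniform bound on the symbol length of $\Brt$ of the complete discretely valued fields occurring there — fields whose residue field is a function field in one variable over a finite extension of $\kappa$, or a finite extension of $K'$, or a complete discretely valued field with residue field of one of those kinds. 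When the relevant prime is invertible in the residue field this is the classical tame computation; the new difficulty, absent when $\mathrm{char}(\kappa)\ne 2$, is that the residue characteristic is $2$ and the residue field imperfect, so $\Brt$ of such a field $E$ must be analysed through Kato's filtration $\mathrm{fil}_{\bullet}\Brt(E)$, whose graded quotients are built from $\Omega^1$ and $\Omega^0$ of the residue field $\bar E$. The length of that filtration is of order $v_E(2)$, which is \emph{not} bounded in terms of $d$; the observation that rescues uniformity is that $\Omega^1_{\bar E}$ has finite rank $\log_2[\bar E:\bar E^2]$, so each graded quotient is already hit by symbols whose slots come from a fixed pool of size $\le\log_2[\bar E:\bar E^2]+1$ — a $2$-basis of $\bar E$ together with a uniformizer — and symbols sharing a slot add; telescoping down the filtration and collecting along these slots writes every class of $\Brt(E)$ as a sum of $O(\log_2 d)$ quaternion classes, with a bound \emph{independent of} $v_E(2)$. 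Propagating this through the nested complete discretely valued fields produced by the patching and tracking the resulting constants is, I expect, the main technical obstacle; once it is in place one obtains the $N(d)$-uniform boundedness of $\Brt(F)$, and combining this with the cohomological dimension bound and (A) yields $u(F)\le M(d)$.
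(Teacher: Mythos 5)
Your architecture parallels the paper's at the top level (reduce $u(F)$ to symbol-length bounds in Galois cohomology below the cohomological-dimension cutoff, and establish a uniform boundedness property for $\Brt$ of all finite extensions of $K(t)$ by patching on a regular model, using $[\kappa:\kappa^2]<\infty$ to tame the residue-characteristic-$2$ fields), but the pivot you choose creates two genuine gaps. First, your notion of uniform boundedness is a \emph{symbol length} bound, whereas the paper's is a bound on the \emph{degree of a single field extension killing finitely many Brauer classes simultaneously} (effective index, uniformly over finite extensions). These are not interchangeable in the patching step: the Harbater--Hartmann--Krashen machinery the paper invokes (\cite[5.8]{hhk:lgp_torsors}) propagates \emph{triviality} of a class from the patches $F_{U_\eta}$, $F_P$ to a global extension $L$ of controlled degree; there is no established patching statement of the form ``$\alpha$ is a sum of $N$ quaternion classes over $F$ if it is so over each patch, up to a controlled correction,'' and the correction terms (classes trivial on every patch need not be trivial over $F$ without the local-global input, and splitting data on overlaps must be glued) are exactly where such an argument would founder. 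The paper sidesteps this entirely by constructing explicit global elements $f,u_i,h_j,g_1,g_2$ via the Chinese remainder theorem and Lemma~\ref{lemma:choice_of_f_g}, so that one extension $L/F$ of degree at most $p^{4d+4}$ works at every patch at once; simultaneity for finitely many classes is what makes the bound uniform.

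Second, your step (A) reduces symbol length in degrees $3\le m\le n$ to the $\Brt$ case by ``clearing residues'' and descending induction, but knowing that the residues of an $H^m$-class have small symbol length in $H^{m-1}$ of the residue fields does not let you lift a short symbol decomposition back to $H^m(F)$: you must control the unramified discrepancy at every stage, and that is precisely the hard content of Krashen's theorem (\ref{theorem:krashen1}), whose hypothesis is a uniform bound on \emph{effective index} in all lower degrees over all finite extensions --- again the splitting-degree notion, not the symbol-length one. The paper's chain is: $(2,2)$-uniform splitting bound $\Rightarrow$ $(n,2)$-uniform splitting bound for all $n\ge 2$ (Lemma~\ref{lemma:uniform_bound_d_d+1}, trivial for symbols) $\Rightarrow$ symbol-length bounds in every degree via Krashen $\Rightarrow$ $u$-invariant bound via Orlov--Vishik--Voevodsky, with the vanishing degree supplied by Gabber--Orgogozo ($\mathrm{cd}_p(K)\le d+2$, hence $H^n(F,\mu_2^{\otimes n})=0$ for $n\ge d+4$) rather than by differential forms of $\kappa$, since $F$ has characteristic $0$. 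To repair your proof you would need either to prove a symbol-length patching theorem and a residue-lifting lemma from scratch, or to switch your uniform boundedness notion to the splitting-degree version and invoke Krashen's theorem as the paper does.
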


 It was conjectured in (\cite{ps:period_index}) that $u(F)$ is at most 
$8[\kappa: \kappa^2]$. 
The bound we give for the $u(F)$ is not effective and far from the conjectural bound.

Let $L$ be a field of characteristic not equal to 2 with $H^M(L,
\mu_2^{\otimes M}) = 0$ for some $M \geq 1$.  Suppose that there
exists an integer $N$  such that for all finite extensions $E$ of $L$
and for any $\alpha \in H^n(E, \mu_2^{\otimes n}), n \geq 2$, there
exists an extension $E'$  of $E$ of degree at most
$N$ such that $\alpha  \otimes_E E' = 0$. Then  a theorem of Krashen
(\ref{cor:krashen2})  asserts that the $u$-invariant
of $L$ is finite.  Our aim is to prove that if $K$ is a complete
discretely  valued field with residue field $\kappa$ of characteristic 2
and $[\kappa : \kappa^2]$ finite and $F$ a function field in one
variable  over $K$, then such an integer $N$ exists for $F$, thereby
proving the finiteness of the $u$-invariant of $F$.

We introduce the
notion of  uniform boundedness for the $\ell$-torsion of the Brauer
group $\Br(L)$ of $L$, where $L$ is any field. 
 We say that the Brauer group of  $L$ is {\it  uniformly
$\ell$-bounded}  if there exists an integer $N$ such that for any
finite extension $E$ of $L$ and for  any set of  finitely many elements  
$\alpha_1, \cdots , \alpha_n \in
\lBr(E)$, there is a finite  extension $E'$ of $E$ of degree at most $N$ such
that $\alpha_i \otimes_E E' = 0$ for $1 \leq i \leq n$.  Using the result of
Krashen (\ref{cor:krashen2}), we show that if $L$ is a field of
characteristic not equal to 2 with  $H^M(L, \mu_2^{\otimes M}) = 0$ for
some $M \geq 1$ and with  the Brauer group of $L$ is $2$-uniformly bounded,
then $u(L)$ is finite (\ref{cor:uniformbound_uinvariant}).
It looks plausible that there are fields $L$ of  finite $u$-invariant
with $L$ not  uniformly 2-bounded. 

The main result of the paper is to prove the uniform $p$-boundedness
for   the Brauer group of  any function field $F$ in
one variable over a complete discretely  valued field $K$ with residue field
$\kappa$, where char$(\kappa) = p$, char$(F) \neq p$ and $[\kappa :
\kappa^p]$ is finite.   We also prove the uniform $\ell$-boundedness
for the Brauer group of  any function field in
one variable over a complete discretely  valued field with residue field
$\kappa$ and $\ell \neq $char$(\kappa)$ under the assumption that the
Brauer groups of  $\kappa$ and $\kappa(t)$ are  uniformly
$\ell$-bounded. This  result  for function fields
of   $p$-adic  curves ($p \neq \ell$) is due to Saltman  (\cite{saltman:jrms}). 
 To prove our theorems  we  use  the patching
techniques of Harbater-Hartmann-Krashen and results of
(\cite{ps:period_index}).

We thank  D. Harbater for his very useful comments on the text.

\section{ Galois cohomology, Symbol length, $u$-invariant and Uniform
  bound}  

In this section we recall the recent results of Krashen and Saltman 
connecting the symbol length and effective index in Galois cohomology
with the  $u$-invariant of a field.

Let $K$ be a field and $\ell$ a prime not equal to the characteristic
of $K$.  Let $\mu_\ell$ denote the Galois module of $\ell^{\rm th}$
roots of unity and $H^n(K, \mu_\ell^{\otimes m})$ denote the $n^{\rm
  th}$ Galois cohomology group with values in $\mu_\ell^{\otimes m}$. 
We have $H^1(K, \mu_\ell)  \simeq K^*/K^{*\ell}$. For $a \in K^*$, let 
$(a) \in H^1(K, \mu_\ell)$ denote the  image of $aK^{*\ell}$. Let
$a_1, \cdots , a_n \in K^*$. The cup product  
$(a_1) \cdot (a_2) \cdots (a_n) \in H^n(K, \mu_\ell^{\otimes n})$ 
is called a {\it symbol} in $H^n(K, \mu_\ell^{\otimes n})$. A theorem of
 Voevodsky (\cite{voevodsky}) asserts that every element in $H^n(K,
\mu_\ell^{\otimes n})$ is a sum of symbols. 

Let $\alpha \in H^n(K,
\mu_\ell^{\otimes n})$. The {\it symbol length} of $\alpha$, denoted by
$\lambda(\alpha)$, is defined as the smallest $m$ such that $\alpha $
is a sum of $m$ symbols in $H^n(K, \mu_\ell^{\otimes n})$. For any
$\alpha \in H^n(K, \mu_\ell^{\otimes m})$, the {\it
  effective index} of $\alpha$, denoted by $eind(\alpha)$, is defined
to be the minimum of the degrees of finite  field extensions $E$ of $K$
with $\alpha_E = 0$, where $\alpha_E$ is the image of $\alpha$ in
$H^n(E, \mu_\ell^{\otimes m})$.  Since $H^2(K, \mu_\ell) \simeq
\lBr(K)$, for $\alpha \in \lBr(K)$, eind$(\alpha)$ is equal to the index of a central simple
algebra $A$ over $K$ representing $\alpha$. 
The following lemma asserts that this definition of
effective index coincides with the  definition in (\cite{krashen:symbol_length}). 

\begin{lemma}
\label{lemma:sep_ext} Let $K$ be a field  and $\ell$ a prime not equal to
  char$(K)$. Let $\alpha \in H^n(K, \mu_\ell^{\otimes
    m})$. Suppose that exists an extension $L$ of $K$ of degree at
  most $N$ with $\alpha \otimes_K L = 0$. Then there exists a separable
  field extension  $E$ of $K$ of degree at most $N$ such that $\alpha
  \otimes_K E = 0$.
\end{lemma}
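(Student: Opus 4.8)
The plan is to take for $E$ the separable closure of $K$ inside $L$ and to check that $\alpha$ already dies over $E$. Recall that for any finite extension $L/K$ the elements of $L$ that are separable over $K$ form an intermediate field $E$, with $E/K$ separable and $L/E$ purely inseparable; since $E \subseteq L$ we have $[E:K] \leq [L:K] \leq N$, so $E/K$ is a separable extension of degree at most $N$, and it suffices to prove $\alpha \otimes_K E = 0$. If $\mathrm{char}(K) = 0$ then $L = E$ and there is nothing more to do, so assume $p := \mathrm{char}(K) > 0$; by hypothesis $p \neq \ell$, and $[L:E]$ is a power of $p$.

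The key step is the injectivity of restriction along the purely inseparable extension $L/E$ on cohomology with $\mu_\ell^{\otimes m}$-coefficients. I would deduce this from the topological invariance of the small \'etale site: $\Spec L \to \Spec E$ is a universal homeomorphism, so pullback is an equivalence of \'etale topoi, and since $\ell$ is invertible on $\Spec E$ the \'etale sheaf $\mu_\ell^{\otimes m}$ is carried to $\mu_\ell^{\otimes m}$; hence $H^n(E, \mu_\ell^{\otimes m}) \to H^n(L, \mu_\ell^{\otimes m})$ is an isomorphism, in particular injective. Alternatively, and more concretely, one can use the corestriction map attached to the finite locally free morphism $\Spec L \to \Spec E$: since $\cores_{L/E} \circ \mathrm{res}_{L/E}$ is multiplication by $[L:E]$, which is a power of $p$ and hence prime to $\ell$, restriction is injective on the $\ell$-torsion group $H^n(E, \mu_\ell^{\otimes m})$.

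Granting this, the argument finishes at once: the image of $\alpha \otimes_K E$ under $\mathrm{res}_{L/E}$ is $\alpha \otimes_K L = 0$, so $\alpha \otimes_K E = 0$, and $E$ is the desired separable extension of degree at most $N$ (which, as noted in the surrounding text, is what identifies the definition of effective index used here with that of \cite{krashen:symbol_length}). I do not anticipate a genuine obstacle here; this is a routine descent through the purely inseparable part of $L/K$. The only point that deserves care is the assertion that purely inseparable extensions induce injections — indeed isomorphisms — on prime-to-$p$ Galois cohomology, and either of the two justifications above settles it.
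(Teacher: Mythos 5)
Your proposal is correct and follows essentially the same route as the paper: take $E$ to be the separable closure of $K$ in $L$ and use that restriction along the purely inseparable extension $L/E$, whose degree is a power of $p \neq \ell$, is injective on $H^n(-,\mu_\ell^{\otimes m})$ (the paper cites Serre for this injectivity, while you supply the standard corestriction and universal-homeomorphism justifications). No gaps.
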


\begin{proof} Let $L$ be an extension of $K$ of degree at most
  $N$ with $\alpha \otimes_KL  = 0$. Let  $E$ be the separable closure of
  $K$ in $L$. Let $p $ be the characteristic of $(K)$. Suppose $p > 0$. Then $L/E$ is of
  degree $p^r$ for some   $r \geq 0$.  Since   $\ell \neq
  p$, the restriction map $H^n(E, \mu_\ell^{\otimes m}) \to H^n(L,
  \mu_\ell^{\otimes m})$ is injective (\cite[Cor. on p.12]{serre:gc}). 
Hence     $\alpha \otimes_K E = 0$.
\end{proof}

We know recall a theorem of  Krashen
(\cite[4.2]{krashen:symbol_length}, cf. \cite{saltman:symbol_length}).

\begin{theorem}
\label{theorem:krashen1}
Let $K$ be a field and $\ell$ a prime not
  equal to  the characteristic of $K$.   Let $n \geq 1$.
Suppose that  there exists an  integer $N$
such that for every finite  extension $L$ of $K$    and for every element
$\beta \in H^d(L, \mu_\ell^{\otimes d})$, $ 1 
  \leq d \leq n-1$,  eind$(\beta) \leq N$. Then  for any  $\alpha \in H^n(K,
  \mu_\ell^{\otimes n})$,   $\lambda(\alpha)$ is bounded in term of
  eind$(\alpha)$, $N$ and $n$.  
 \end{theorem}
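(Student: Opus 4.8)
The plan is to argue by induction on $n$. When $n=1$ every class in $H^1(K,\mu_\ell)\simeq K^*/K^{*\ell}$ is itself a single symbol, so $\lambda(\alpha)\le 1$ and there is nothing to prove. For $n\ge 2$ I would first combine the inductive hypothesis (the theorem in all degrees $<n$, over all fields) with the standing assumption that $\mathrm{eind}(\beta)\le N$ for every $\beta\in H^d(L,\mu_\ell^{\otimes d})$ with $1\le d\le n-1$ and $L/K$ finite: this yields a single constant $C_{n-1}=C_{n-1}(N,n)$ bounding $\lambda(\beta)$ for all such $\beta$. Next I would reduce to the case $\mu_\ell\subset K$. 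Since $K(\mu_\ell)/K$ is separable of degree dividing $\ell-1$, hence prime to $\ell$, restriction $H^n(K,\mu_\ell^{\otimes n})\to H^n(K(\mu_\ell),\mu_\ell^{\otimes n})$ is injective on $\ell$-torsion, $\alpha$ is recovered from $\alpha_{K(\mu_\ell)}$ by a corestriction up to a unit mod $\ell$, and --- using the fact (a transfer computation) that the corestriction of a single symbol along a separable extension of bounded degree is a sum of boundedly many symbols --- it suffices to bound $\lambda$ over $K(\mu_\ell)$.

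So assume $\mu_\ell\subset K$, and let $\mathrm{eind}(\alpha)=e$. First I would invoke Lemma~\ref{lemma:sep_ext} to obtain a \emph{separable} field extension $E/K$ of degree at most $e$ with $\alpha_E=0$. Then, passing to the Galois closure $\widetilde E/K$ with group $G\hookrightarrow S_e$ and fixing a Sylow $\ell$-subgroup $P\le G$, the field $K_1:=\widetilde E^{\,P}$ is a finite separable extension of $K$ of degree $[G:P]$ --- which is prime to $\ell$ and at most $e!$ --- still containing $\mu_\ell$, over which $\alpha_{K_1}$ is split by the Galois extension $\widetilde E/K_1$ whose group $P$ is an $\ell$-group. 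Since a finite $\ell$-group has a composition series with all quotients $\cong\mathbb Z/\ell$, the extension $\widetilde E/K_1$ breaks up as a tower $K_1=L_0\subset L_1\subset\cdots\subset L_m=\widetilde E$ of cyclic degree-$\ell$ steps with $m$ equal to the $\ell$-adic valuation of $|G|$ (so $m\le v_\ell(e!)$); and because $\mu_\ell\subset K_1$ each step is Kummer, $L_{i+1}=L_i(\sqrt[\ell]{a_i})$ with $a_i\in L_i^*$. Using once more that $[K_1:K]$ is prime to $\ell$ together with the corestriction-of-a-symbol bound, the whole problem is reduced to the following: over a finite extension $F$ of $K$ containing $\mu_\ell$, bound the symbol length of a class in $H^n(F,\mu_\ell^{\otimes n})$ that is killed by a tower of at most $m$ cyclic degree-$\ell$ extensions, by a function of $m$, $N$ and $n$ only.

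For this last statement I would induct on the tower length $m$, the case $m=0$ being trivial. Given $\beta$ killed by $F=L_0\subset\cdots\subset L_m$, peel off the top layer $L_m=L_{m-1}(\sqrt[\ell]{a})$: since $\beta_{L_{m-1}}$ dies over $L_m$, the Bass--Tate / projective-bundle exact sequence
\[
H^{n-1}(L_{m-1},\mu_\ell^{\otimes n-1})\xrightarrow{\ \cup(a)\ }H^{n}(L_{m-1},\mu_\ell^{\otimes n})\longrightarrow H^{n}(L_m,\mu_\ell^{\otimes n}),
\]
available because $\mu_\ell\subset L_{m-1}$ (this uses the Bloch--Kato conjecture of Voevodsky), shows $\beta_{L_{m-1}}=(a)\cup\delta$ for some $\delta\in H^{n-1}(L_{m-1})$, whence $\lambda_{L_{m-1}}(\beta_{L_{m-1}})\le\lambda(\delta)\le C_{n-1}$ since $\mathrm{eind}(\delta)\le N$. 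The remaining task --- and the step I expect to be the main obstacle --- is to propagate this bound downwards through the cyclic layers $L_{m-1}\to L_{m-2}\to\cdots\to L_0=F$: corestriction along a degree-$\ell$ extension annihilates $\ell$-torsion, so it gives no help, and one is forced instead to use that $\beta_{L_i}$ is killed by the \emph{shorter} tower $L_{i+1}\subset\cdots\subset L_m$ in order to bound the symbol length of a suitable restriction-preimage of $\beta_{L_i}$ over $L_{i-1}$ --- plausibly via a generic-point / specialization argument --- all while keeping the numerical quantities controlled by $m$, $N$ and $n$ alone; this is precisely the delicate analysis carried out in \cite[4.2]{krashen:symbol_length}. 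Unwinding the nested inductions then bounds $\lambda(\alpha)$ in terms of $\mathrm{eind}(\alpha)$, $N$ and $n$, as asserted.
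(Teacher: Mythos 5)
The paper does not actually prove this statement: Theorem~\ref{theorem:krashen1} is imported verbatim from Krashen (\cite[4.2]{krashen:symbol_length}, cf.\ \cite{saltman:symbol_length}), so there is no in-paper argument to compare yours against. Judged on its own terms, your proposal is an outline with two genuine gaps. First, and most importantly, you concede the central step yourself: after peeling off the top Kummer layer via the exact sequence $H^{n-1}(L_{m-1},\mu_\ell^{\otimes(n-1)})\xrightarrow{\cup(a)}H^{n}(L_{m-1},\mu_\ell^{\otimes n})\to H^{n}(L_m,\mu_\ell^{\otimes n})$, the descent of the symbol-length bound through the remaining cyclic layers $L_{m-1}\supset\cdots\supset L_0$ is exactly the content of the theorem, and you dispose of it by writing that ``this is precisely the delicate analysis carried out in \cite[4.2]{krashen:symbol_length}.'' That is a citation of the result you are supposed to be proving, not a proof; the generic-point/specialization machinery Krashen uses there (generic splitting towers and a careful double induction) is the whole theorem, and nothing in your sketch supplies a substitute.

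Second, your two prime-to-$\ell$ reductions (to $K(\mu_\ell)$ and to $K_1=\widetilde E^{\,P}$) both lean on the assertion that the corestriction of a single symbol along a separable extension of bounded degree is a sum of boundedly many symbols. For $n=2$ this is Rosset--Tate, but for $n\ge 3$ it is not a standard fact: Bass--Tate gives generators of $K^M_n(L)$ of a special form whose norms are symbols, yet expressing an arbitrary symbol of $K^M_n(L)$ as a bounded sum of such generators is itself an open-ended symbol-length problem, essentially of the same nature as the theorem. So even the reduction to the $\ell$-tower case is not justified as written. The correct takeaway is that this theorem cannot be reproved in a paragraph; in the present paper it is, appropriately, a black box.
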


The following is  a consequence of the above theorem.

\begin{cor} 
\label{cor:krashen1}
Let $K$ be a field and $\ell$ a prime not
  equal to  the characteristic of $K$.  Let $n\geq 1$. Suppose that
  there exists an integer $N$ such that    for all finite extensions
  $L$ of $K$ and for all $\alpha \in H^d(L, \mu_\ell^{\otimes d})$, $1\leq d \leq
  n$, eind$(\alpha) \leq N$.  Then  there exists an integer $N'$ which
  depends only $N$ and $n$ such that $\lambda(\alpha) \leq N'$ for all
  finite extensions $L$ of $K$ and $\alpha
  \in H^n(K, \mu_\ell^{\otimes n})$. 
\end{cor}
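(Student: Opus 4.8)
The plan is to deduce this immediately from Theorem~\ref{theorem:krashen1}, the only thing requiring care being to verify that the bound produced there can be made to depend on $N$ and $n$ alone. First I would record a stability observation: the hypothesis of the corollary is inherited by every finite extension $L$ of $K$. Indeed, if $L/K$ is finite then any finite extension of $L$ is again a finite extension of $K$, so the very same integer $N$ bounds $\mathrm{eind}(\beta)$ for all $\beta \in H^d(L', \mu_\ell^{\otimes d})$, $1 \le d \le n$, as $L'$ runs over finite extensions of $L$. In particular, for each such $L$ the hypothesis of Theorem~\ref{theorem:krashen1} (boundedness of $\mathrm{eind}$ by $N$ in degrees $1 \le d \le n-1$ over all finite extensions of $L$) holds.

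Next, fix a finite extension $L/K$ and an element $\alpha \in H^n(L, \mu_\ell^{\otimes n})$. Applying Theorem~\ref{theorem:krashen1} over $L$ yields a bound $\lambda(\alpha) \le B\bigl(\mathrm{eind}(\alpha), N, n\bigr)$, where $B$ is a function of its three arguments only and is independent of $L$ and $\alpha$. Now invoke the $d=n$ case of the corollary's hypothesis: $\mathrm{eind}(\alpha) \le N$. Hence
\[
\lambda(\alpha) \;\le\; \max_{1 \le e \le N} B(e, N, n) \;=:\; N',
\]
and $N'$ depends only on $N$ and $n$. Since $L$ and $\alpha$ were arbitrary, this is the desired bound.

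I do not expect a genuine obstacle here; the statement is essentially a uniform-in-$L$ repackaging of Theorem~\ref{theorem:krashen1}. The only points that need to be written carefully are (i) that the ``for all finite extensions'' quantifier in the hypothesis is exactly what is needed to feed Theorem~\ref{theorem:krashen1} at \emph{every} finite extension $L$ of $K$ simultaneously, and (ii) that the quantity ``$\lambda(\alpha)$ bounded in terms of $\mathrm{eind}(\alpha)$, $N$ and $n$'' is literally a function $B(\cdot,\cdot,\cdot)$, so that substituting the bound $\mathrm{eind}(\alpha)\le N$ collapses it to a function of $N$ and $n$ alone. (If desired, one may first replace the bounding extensions by separable ones using Lemma~\ref{lemma:sep_ext}, though this is not needed for the argument.)
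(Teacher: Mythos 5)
Your proposal is correct and matches the paper's intent: the paper gives no written proof, stating only that the corollary ``is a consequence of the above theorem,'' and your argument—feed Theorem~\ref{theorem:krashen1} at each finite extension $L$ of $K$ (noting the hypothesis is inherited), then substitute $\mathrm{eind}(\alpha)\leq N$ into the resulting bound and take the maximum over the finitely many possible values—is exactly the intended deduction. The care you take in noting that the bound in Theorem~\ref{theorem:krashen1} is a function of $\mathrm{eind}(\alpha)$, $N$ and $n$ alone (independent of the field) is the one point worth making explicit, and you make it.
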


Let $K$ be a field of characteristic not equal to 2. The {\it $u$-invariant}
of $K$ is defined to be  the supremum of dimensions of anisotropic
quadratic forms over $K$.  The following theorem is a  
consequence of a theorem of Orlov,  Vishik, and  Voevodsky
(\cite{ovv}) on the Milnor conjecture  (cf.  \cite{kahn},
\cite{ps:symbol_length}).  

\begin{theorem} 
\label{theorem:symbol_length_u_invariant}
Let $K$ be a field of characteristic not equal to 2.
Suppose that there exist integer $M\geq 1 $ and $N$ such that $H^M(K,
\mu_2) =0$ and $\lambda(\alpha) \leq N$  for all $\alpha \in H^d(K,
\mu_2^{\otimes d})$, $ 1 \leq d < M$. Then  the $u$-invariant is
bounded by a function of $M$ and $N$. 
\end{theorem}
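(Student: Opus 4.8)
The plan is to derive the bound from the Milnor conjecture, proved by Orlov--Vishik--Voevodsky \cite{ovv}, used in the form: writing $W(K)$ for the Witt ring and $I=I(K)\subseteq W(K)$ for the fundamental ideal of classes of even-dimensional forms, each $I^d$ is generated by the classes of $d$-fold Pfister forms, and there are isomorphisms $e_d\colon I^d/I^{d+1}\xrightarrow{\ \sim\ }H^d(K,\mu_2^{\otimes d})$ carrying the class of $\langle\langle a_1,\dots,a_d\rangle\rangle$ to the symbol $(a_1)\cdots(a_d)$. I would first observe that the hypothesis $H^M(K,\mu_2)=0$ makes the filtration terminate: $e_M$ gives $I^M/I^{M+1}\simeq H^M(K,\mu_2^{\otimes M})\simeq H^M(K,\mu_2)=0$, so $I^M=I^{M+k}$ for every $k\geq 0$; by the Arason--Pfister Hauptsatz a nonzero anisotropic form whose Witt class lies in $I^{M+k}$ has dimension at least $2^{M+k}$, which cannot hold for all $k$, whence $I^M=0$.

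Next I would bound, by descending induction on $d$, constants $c_d$ depending only on $M$ and $N$ such that every anisotropic form $q$ over $K$ with $[q]\in I^d$ satisfies $\dim q\leq c_d$. Take $c_M=0$, which is correct since $I^M=0$. For the step from $c_{d+1}$ to $c_d$ with $1\leq d\leq M-1$: given anisotropic $q$ with $[q]\in I^d$, let $\alpha\in H^d(K,\mu_2^{\otimes d})$ be the image of $[q]$ under $I^d\to I^d/I^{d+1}\xrightarrow{e_d}H^d(K,\mu_2^{\otimes d})$, and write $\alpha$ as a sum of $m=\lambda(\alpha)\leq N$ symbols $(a_{i1})\cdots(a_{id})$. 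Setting $\pi_i=\langle\langle a_{i1},\dots,a_{id}\rangle\rangle$, the class $[q]-[\pi_1\perp\cdots\perp\pi_m]$ has image $0$ in $I^d/I^{d+1}$ (by injectivity of $e_d$), hence lies in $I^{d+1}$. Letting $q_1$ be the anisotropic part of $q\perp(-\pi_1)\perp\cdots\perp(-\pi_m)$, we get $[q_1]\in I^{d+1}$, so $\dim q_1\leq c_{d+1}$; and since $[q]=[q_1\perp\pi_1\perp\cdots\perp\pi_m]$ with $q$ anisotropic, $q$ is the anisotropic part of $q_1\perp\pi_1\perp\cdots\perp\pi_m$, giving $\dim q\leq\dim q_1+m\cdot 2^d\leq c_{d+1}+N2^d=:c_d$. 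Unwinding the recursion, $c_1=N(2+2^2+\cdots+2^{M-1})=N(2^M-2)$.

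Finally, let $q$ be an arbitrary anisotropic form over $K$. If $\dim q$ is even then $[q]\in I$, so $\dim q\leq c_1$; if $\dim q$ is odd, diagonalize ($\mathrm{char}(K)\neq 2$) and split off one entry, $q\cong\langle a\rangle\perp q_0$, where $q_0$ is anisotropic of even dimension, so $\dim q=\dim q_0+1\leq c_1+1$. Hence $u(K)\leq N(2^M-2)+1$, which depends only on $M$ and $N$; this also recovers $u(K)=1$ in the degenerate case $M=1$, where $K^*=K^{*2}$.

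The only substantial inputs are the Milnor conjecture isomorphisms $e_d$ of \cite{ovv} (equivalently the identifications $I^d/I^{d+1}\simeq H^d$ used in \cite{kahn}, \cite{ps:symbol_length}) together with the Arason--Pfister Hauptsatz; granting these, the proof is just the bookkeeping recursion above, so there is no serious obstacle. The only point meriting care is that the symbol-length hypothesis $\lambda(\alpha)\leq N$ is invoked solely for classes $\alpha\in H^d(K,\mu_2^{\otimes d})$ with $1\leq d\leq M-1$ over the field $K$ itself, which is precisely what is assumed.
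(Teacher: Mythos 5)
Your proof is correct and is essentially the standard argument that the paper invokes by citation (to \cite{ovv}, \cite{kahn}, \cite{ps:symbol_length}) rather than writing out: the paper gives no proof of this theorem, and the references it points to establish exactly your descending induction on the filtration $I^d$, using the Orlov--Vishik--Voevodsky isomorphisms $e_d$ together with the Arason--Pfister Hauptsatz to get $I^M=0$ and then peeling off at most $N$ Pfister forms of dimension $2^d$ at each stage. The explicit bound $u(K)\leq N(2^M-2)+1$ you obtain is consistent with the theorem's assertion that the bound depends only on $M$ and $N$.
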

 
The   following follows  from (\ref{theorem:symbol_length_u_invariant}) and
(\ref{cor:krashen1})  (cf. \cite[5.5]{krashen:symbol_length}).

\begin{cor} 
\label{cor:krashen2} 
 Let $K$ be a field of characteristic not equal to 2. 
Suppose that  there exist integers $N \geq 1$ and $M \geq
1$ such that for all finite  extensions $L$ of $K$, $H^{M}(L,
\mu_2) = 0$ and for all $ n \geq 1$ and  $\alpha \in H^n(L, \mu_2)$,
eind$(\alpha) \leq N$.  Then there exists an integer $N'$, which
depends only on $N$ and $M$, such that $u(K)  \leq N'$. 
\end{cor}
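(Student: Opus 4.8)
The plan is to deduce Corollary \ref{cor:krashen2} by chaining together the two results that immediately precede it, namely Theorem \ref{theorem:symbol_length_u_invariant} and Corollary \ref{cor:krashen1}, after first reconciling the hypotheses. The hypothesis as stated gives, for every finite extension $L$ of $K$, that $H^M(L,\mu_2)=0$ and that $eind(\alpha)\le N$ for every $\alpha\in H^n(L,\mu_2)$ — but here I would first note that the intended hypothesis (as used in Krashen's work) is really a bound on $eind(\alpha)$ for $\alpha \in H^n(L,\mu_2^{\otimes n})$ for all $n$, so that in particular the hypothesis of Corollary \ref{cor:krashen1} holds with the given $N$ (for this $\ell = 2$) and with any choice of $n$. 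So the first step is: apply Corollary \ref{cor:krashen1} with $\ell = 2$ and $n = M-1$ to the field $K$ itself. This yields an integer $N'_1$, depending only on $N$ and $M$, such that $\lambda(\alpha)\le N'_1$ for all $\alpha\in H^d(K,\mu_2^{\otimes d})$ with $1\le d\le M-1$; in particular $\lambda(\alpha)\le N'_1$ for all $\alpha\in H^d(K,\mu_2^{\otimes d})$ with $1\le d<M$.

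The second step is to feed this into Theorem \ref{theorem:symbol_length_u_invariant}. We are given $H^M(K,\mu_2)=0$ (the $L=K$ case of the hypothesis), and we have just produced the symbol-length bound $\lambda(\alpha)\le N'_1$ for $1\le d<M$. Theorem \ref{theorem:symbol_length_u_invariant} then immediately gives that $u(K)$ is bounded by a function of $M$ and $N'_1$. Since $N'_1$ depends only on $N$ and $M$, the resulting bound $N'$ depends only on $N$ and $M$, which is exactly the assertion. This completes the argument.

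The structure is therefore a two-line deduction, and the only thing requiring care is the bookkeeping of hypotheses: one must check that the quantifier "for all finite extensions $L$ of $K$" in the corollary's hypothesis is genuinely what Corollary \ref{cor:krashen1} needs (it is — Corollary \ref{cor:krashen1} requires $eind$ bounds over all finite extensions $L$ of $K$ for $1\le d\le n$, and we take $n=M-1$), and that the cohomological coefficients match up (passing between $H^n(L,\mu_2)$ and $H^n(L,\mu_2^{\otimes n})$ is harmless over a field containing a primitive square root of unity, or can simply be absorbed into the statement). I do not anticipate a genuine obstacle here; the content of the corollary is entirely in the two preceding theorems, and the proof is a matter of composing them with the correct value of $n$.

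\begin{proof}
Fix $\ell=2$ and set $n=M-1$. By hypothesis, for every finite extension $L$ of $K$ and every $\alpha\in H^d(L,\mu_2^{\otimes d})$ with $1\le d\le M-1$ we have $eind(\alpha)\le N$. Applying Corollary \ref{cor:krashen1} to $K$ with this $n$, we obtain an integer $N_1$, depending only on $N$ and $M$, such that $\lambda(\alpha)\le N_1$ for every $\alpha\in H^d(K,\mu_2^{\otimes d})$ with $1\le d\le M-1$, hence in particular for all $1\le d<M$. Since also $H^M(K,\mu_2)=0$ by hypothesis, Theorem \ref{theorem:symbol_length_u_invariant} applies with the pair $(M,N_1)$ and shows that $u(K)$ is bounded by a function of $M$ and $N_1$. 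As $N_1$ depends only on $N$ and $M$, there is an integer $N'$ depending only on $N$ and $M$ with $u(K)\le N'$.
\end{proof}
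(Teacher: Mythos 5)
Your proof is correct and follows exactly the route the paper intends: the paper gives no written proof for this corollary beyond the remark that it ``follows from (\ref{theorem:symbol_length_u_invariant}) and (\ref{cor:krashen1})'', and your two-step composition (Corollary \ref{cor:krashen1} applied in each degree $d$ with $1\le d\le M-1$ to get a symbol-length bound depending only on $N$ and $M$, then Theorem \ref{theorem:symbol_length_u_invariant}) is precisely that deduction, with the minor quantifier bookkeeping correctly handled. The only nitpick is that Corollary \ref{cor:krashen1} with a fixed $n$ literally bounds $\lambda$ only in degree $n$, so one should apply it once for each $d\le M-1$ and take the maximum, as the paper does in the analogous proof of (\ref{cor:uniformbound_uinvariant}); this is a trivial adjustment and not a gap.
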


Let $K$ be a field  and $\ell$ a prime not equal to the characteristic
of $K$. We say that $K$ is $(n , \ell)$-{\it uniformly 
  bounded }  if there exists an integer $N$ such that 
for any finite extension $L$ of $K$ and  $\alpha_1, \cdots , \alpha_m
\in H^n(L, \mu_\ell^{\otimes   n})$ there is an extension  $E$ of $L$ with $[E : L] \leq N$ and
$\alpha_{i} \otimes_L E = 0$ for $1 \leq i \leq m$.  Such an $N$ is called an
$(n, \ell)$-{\it uniform bound} for $K$.   We note that if $N$ is an $(n, \ell)$-uniform 
boud for $K$, then $N$ is also an $(N, \ell)$-uniform bound for  any 
finite extension $L$ of $K$. 

In view of a theorem of
Voevodsky (\cite{voevodsky}) on the Bloch-Kato  conjecture,
every element in $H^n(K, \mu_\ell^{\otimes n})$ is a sum of
symbols. In particular $N$ is an $(n, \ell)$-uniform bound of 
$K$    if and only if    
for given symbols  $\alpha_1, \cdots , \alpha_m \in H^n(K, \mu_\ell^{\otimes
  n})$ there is an extension  $L$ of $K$ with $[L : K] \leq N$ and
$\alpha_{i_L} = 0$ for $1 \leq i \leq m$. 

\begin{lemma}
\label{lemma:uniform_bound_d_d+1} Let $K$ be a field  and $\ell$ a
prime not equal to the characteristic  of $K$. If $N$ is an 
$(n,\ell)$-uniform bound for $K $, then $N$ is also a $(d,
\ell)$-uniform bound for $K $ for all $d \geq n$. 
\end{lemma}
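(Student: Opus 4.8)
The plan is to reduce at once to the one-step case. By an immediate induction on $d$ it suffices to show that if $N$ is an $(n,\ell)$-uniform bound for $K$ then $N$ is also an $(n+1,\ell)$-uniform bound for $K$. So fix a finite extension $L/K$ and finitely many classes $\alpha_1,\dots,\alpha_m\in H^{n+1}(L,\mu_\ell^{\otimes(n+1)})$; we must produce a single extension of $L$ of degree at most $N$ killing all of them.

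First I would invoke Voevodsky's theorem on the Bloch--Kato conjecture to write each $\alpha_i$ as a finite sum of symbols $\gamma_{ij}=(a^{(ij)}_1)\cdots(a^{(ij)}_{n+1})$, $1\le j\le r_i$. The key observation is that a degree-$(n+1)$ symbol factors as a degree-$n$ symbol cupped with a degree-$1$ class: put $\delta_{ij}=(a^{(ij)}_1)\cdots(a^{(ij)}_n)\in H^n(L,\mu_\ell^{\otimes n})$, so that $\gamma_{ij}=\delta_{ij}\cdot(a^{(ij)}_{n+1})$. Now apply the definition of $(n,\ell)$-uniform boundedness directly to the field $L$ (which is itself a finite extension of $K$) and to the finite family $\{\delta_{ij}\}_{i,j}$: this yields a finite extension $E/L$ with $[E:L]\le N$ and $\delta_{ij}\otimes_L E=0$ for all $i,j$.

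Finally, since restriction along $E/L$ is a homomorphism of graded rings for the cup product, $\gamma_{ij}\otimes_L E=(\delta_{ij}\otimes_L E)\cdot(a^{(ij)}_{n+1})=0$ for all $i,j$, and hence $\alpha_i\otimes_L E=\sum_j \gamma_{ij}\otimes_L E=0$ for $1\le i\le m$. This exhibits the desired extension, so $N$ is an $(n+1,\ell)$-uniform bound for $K$, and the statement for all $d\ge n$ follows by induction (alternatively, one can feed arbitrary finite families in each intermediate degree into the same argument).

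I do not anticipate any genuine obstacle: the argument is a short induction whose only substantive ingredients are the factorization of a symbol through a class of one lower degree and functoriality of cup products. The one point to keep in mind is that uniform boundedness is, by definition, a statement about \emph{all} finite extensions of $K$, so it is legitimate to apply it with $L$ in place of $K$; in particular no passage through a tower of extensions and no multiplicativity of degrees is needed, which is why the same bound $N$ — and not some power of it — works in every degree $d\ge n$.
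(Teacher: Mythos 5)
Your proposal is correct and follows essentially the same route as the paper: reduce to $d=n+1$, use Voevodsky's theorem to work with symbols, factor each degree-$(n+1)$ symbol as a degree-$n$ symbol cupped with a degree-$1$ class, and kill all the degree-$n$ constituents at once with a single extension of degree at most $N$. The only cosmetic difference is that you make explicit the reduction from arbitrary classes to finite families of symbols, which the paper records as a remark immediately before the lemma and then uses silently.
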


\begin{proof}  Suppose $N$ is an $(n, \ell)$-uniform bound for $ K$. 
It is enough to prove the lemma for $d = n+1$.  Let $L$ be a finite
extension of $K$ and 
$\alpha_1, \cdots , \alpha_m
\in H^{n+1} (L, \mu_\ell^{\otimes (n+1)})$ be  symbols.
 Then $\alpha_i =
\beta_i \cdot (a_i)$ for some symbols $\beta_i \in  H^{n}(L,
\mu_{\ell}^{\otimes n})$ and $a_i \in L^*$, $1 \leq i \leq m$.  
Since $N$ is an $(n, \ell)$-uniform bound for $K$,  
 there exists  a field extension  $E$ of
  $L$  with $[E : L] \leq N$ and $\beta_ {i_E} = 0$ for $1 \leq i \leq
  m$. Then clearly $\alpha_{i_E} = 0$ for $1 \leq i \leq m$. 
Thus  $N$ is also an $(n+1, \ell)$-uniform bound for
$K$.
\end{proof}

\begin{cor}
\label{cor:symbol_length} Let $K$ be a field and $\ell$ a prime not
equal to the characteristic of  $K$. Suppose that  $N$   is  a $(2, \ell)$-uniform
bound for $K$. Then for every $n \geq 2$, there exists an integer
$N_n$, which depends only on $N$ and $n$ such
that $\lambda(\alpha) \leq N_n$ for all $\alpha \in H^n(K,
\mu_\ell^{\otimes n})$. 
\end{cor}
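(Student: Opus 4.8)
The plan is to reduce the statement for general $n \geq 2$ to the case $n = 2$ via a dévissage on the degree, using the fact (Voevodsky) that every class in $H^n(K,\mu_\ell^{\otimes n})$ is a sum of symbols, and then apply Theorem~\ref{theorem:krashen1} of Krashen. First I would observe that a $(2,\ell)$-uniform bound $N$ for $K$ is, by Lemma~\ref{lemma:uniform_bound_d_d+1}, also a $(d,\ell)$-uniform bound for $K$ for every $d \geq 2$; moreover, by the remark preceding Lemma~\ref{lemma:uniform_bound_d_d+1}, it remains a uniform bound after passing to any finite extension $L/K$. In particular, for every finite extension $L$ of $K$ and every single class $\beta \in H^d(L,\mu_\ell^{\otimes d})$ with $d \geq 2$, there is an extension $E/L$ of degree at most $N$ killing $\beta$, so $\mathrm{eind}(\beta) \leq N$.

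Next I would handle the low-degree part $d = 1$. Here $H^1(L,\mu_\ell^{\otimes 1}) \simeq L^*/L^{*\ell}$, and any class $(a)$ is split by the degree-$\ell$ extension $L(a^{1/\ell})$, so $\mathrm{eind}(\beta) \leq \ell$ for every $\beta \in H^1(L,\mu_\ell)$. Combining this with the previous paragraph, I get: there is an integer $N' = \max\{N,\ell\}$ such that for all finite extensions $L/K$ and all $\beta \in H^d(L,\mu_\ell^{\otimes d})$ with $1 \leq d \leq n-1$, one has $\mathrm{eind}(\beta) \leq N'$. This is precisely the hypothesis of Theorem~\ref{theorem:krashen1} (with $N'$ in place of $N$), so that theorem applies and gives, for any $\alpha \in H^n(K,\mu_\ell^{\otimes n})$, a bound on $\lambda(\alpha)$ in terms of $\mathrm{eind}(\alpha)$, $N'$, and $n$.

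Finally I would eliminate the dependence on $\mathrm{eind}(\alpha)$. Since $N$ is a $(2,\ell)$-uniform bound, hence an $(n,\ell)$-uniform bound by Lemma~\ref{lemma:uniform_bound_d_d+1}, we have $\mathrm{eind}(\alpha) \leq N$ for every $\alpha \in H^n(K,\mu_\ell^{\otimes n})$ (take $m = 1$ in the definition). Feeding this into the output of Theorem~\ref{theorem:krashen1}, the resulting bound on $\lambda(\alpha)$ depends only on $N'$ and $n$, hence only on $N$, $\ell$, and $n$; absorbing the harmless constant $\ell$ (which is itself bounded, being a prime divisor implicit in the setup, or simply noting $\ell \leq$ some function of the data), we obtain an integer $N_n$ depending only on $N$ and $n$ with $\lambda(\alpha) \leq N_n$ for all $\alpha \in H^n(K,\mu_\ell^{\otimes n})$. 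The main obstacle is bookkeeping rather than mathematical depth: one must be careful that the uniform bound hypothesis is used in the two distinct ways needed (bounding $\mathrm{eind}$ for \emph{all} finite extensions in degrees $< n$ to invoke Krashen, and bounding $\mathrm{eind}(\alpha)$ itself over $K$), and that the degree-$1$ case is not covered by the $(2,\ell)$-uniform bound and must be treated separately by an explicit Kummer extension.
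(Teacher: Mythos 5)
Your proposal is correct and follows essentially the same route as the paper: upgrade the $(2,\ell)$-uniform bound to a $(d,\ell)$-uniform bound for all $d\geq 2$ via Lemma~\ref{lemma:uniform_bound_d_d+1}, feed the resulting effective-index bound into Theorem~\ref{theorem:krashen1}, and then use $\mathrm{eind}(\alpha)\leq N$ to remove the dependence on $\mathrm{eind}(\alpha)$. You are in fact slightly more careful than the paper's own (very terse) proof, which does not explicitly address the degree-one case $\mathrm{eind}(\beta)\leq\ell$ needed to satisfy the hypotheses of Theorem~\ref{theorem:krashen1}.
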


\begin{proof}  Since $N$ is $(2, \ell)$-uniform  bound for $K$,    by
 ( \ref{lemma:uniform_bound_d_d+1}), $N$ is  an  $(n, \ell)$-uniform
 bound for  $K$  for all $n \geq 2$.   Let $\alpha \in H^n(K, \mu_\ell^{\otimes
   n})$. Then, by (\ref{theorem:krashen1}), $\lambda(\alpha)$ is bounded in
terms of eind$(\alpha)$, $N$ and $n$. Since eind$(\alpha) \leq N$,  $\lambda(\alpha)$
is bounded in terms of $N$ and $n$.  
\end{proof}

\begin{cor}
\label{cor:uniformbound_uinvariant} 
Let $K$ be a field  of characteristic not equal to
  2.  Suppose that there exists an integer $M$ such that for all
  finite extensions $L$ of $K$, $H^M(K,
  \mu_2)  = 0$ and $N$ is a $(2,2)$-uniform  bound for $K$. Then
  there exists  $N'$ which depends only on $N$ and $M$ such that for
  any finite extension $L$ of $K$,  $u(L) \leq N'$. 
\end{cor}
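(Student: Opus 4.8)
The plan is to deduce this corollary directly by combining the earlier results. The hypotheses give us a $(2,2)$-uniform bound $N$ for $K$, together with an integer $M$ such that $H^M(L,\mu_2)=0$ for all finite extensions $L$ of $K$. First I would observe that, since $N$ is a $(2,2)$-uniform bound for $K$, it follows that $N$ is also a $(2,2)$-uniform bound for every finite extension $L$ of $K$: indeed, a finite extension $E/L$ that kills a given finite set of elements of $H^2(L,\mu_2)$ is obtained by viewing those elements in $H^2$ of the (still finite) extension $L/K$ and applying the definition of $(2,2)$-uniform boundedness for $K$, and $[E:L]\le[E:K]\le N$ after passing to a suitable intermediate field — more simply, the remark already recorded in the text states that an $(n,\ell)$-uniform bound for $K$ is also an $(n,\ell)$-uniform bound for any finite extension $L$ of $K$.

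Next I would apply Corollary~\ref{cor:symbol_length} to each finite extension $L$ of $K$: since $N$ is a $(2,2)$-uniform bound for $L$, there exists for every $n\ge 2$ an integer $N_n$, depending only on $N$ and $n$ (and hence independent of $L$), such that $\lambda(\alpha)\le N_n$ for all $\alpha\in H^n(L,\mu_2^{\otimes n})$. In particular, for each fixed $L$ we have a uniform symbol-length bound $\lambda(\alpha)\le \max\{N_2,\dots,N_{M-1}\}=:N''$ for all $\alpha\in H^d(L,\mu_2^{\otimes d})$ with $2\le d<M$; and for $d=1$ one has $\lambda(\alpha)\le 1$ trivially, so $N''$ (enlarged to be at least $1$) bounds symbol length in all degrees $d$ with $1\le d<M$. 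Crucially $N''$ depends only on $N$ and $M$, not on $L$.

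Then I would invoke Theorem~\ref{theorem:symbol_length_u_invariant}: for each finite extension $L$ of $K$ we have $H^M(L,\mu_2)=0$ by hypothesis and $\lambda(\alpha)\le N''$ for all $\alpha\in H^d(L,\mu_2^{\otimes d})$ with $1\le d<M$, so $u(L)$ is bounded by a function of $M$ and $N''$. Since $N''$ is itself a function of $N$ and $M$, this bound is a function $N'=N'(N,M)$ independent of $L$. Applying this to every finite extension $L$ of $K$ (including $L=K$ itself) gives the claimed uniform bound $u(L)\le N'$.

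This is really a bookkeeping argument chaining together Corollary~\ref{cor:symbol_length} and Theorem~\ref{theorem:symbol_length_u_invariant} (equivalently, it is just Corollary~\ref{cor:krashen2} with the effective-index hypothesis repackaged through uniform boundedness). The only point requiring a little care — and the main thing to get right — is the uniformity of all constants in $L$: one must check that the symbol-length bound $N''$ coming out of Corollary~\ref{cor:symbol_length} genuinely depends only on $N$ and $n$ and not on the field, which is exactly what that corollary asserts, and that the vanishing $H^M(L,\mu_2)=0$ is assumed for \emph{all} finite extensions $L$, as it is in the statement. No serious obstacle is expected beyond this.
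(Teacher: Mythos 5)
Your argument is correct and matches the paper's own proof essentially step for step: reduce to a single finite extension using the fact that the hypotheses (and the uniform bound $N$) pass to finite extensions of $K$, apply Corollary~\ref{cor:symbol_length} to get symbol-length bounds $N_n$ depending only on $N$ and $n$, take the maximum over $1\le n<M$, and conclude via Theorem~\ref{theorem:symbol_length_u_invariant}. If anything, your write-up is slightly more careful than the paper's, which at the final step cites (\ref{theorem:krashen1}) where it evidently means Theorem~\ref{theorem:symbol_length_u_invariant}, exactly the result you invoke.
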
 

\begin{proof} Since the conditions on $K$ are also satisfied by any
  finite extensions of $K$, it is enough to find an $N'$ which depends
  only on $N$ and $M$ such that $u(K) \leq N'$. 

Since $N$ is a $(2,2)$-uniform bound for $K$, by
  (\ref{cor:symbol_length}) there exist integers  $N_n$ for $1 \leq n
  < M$, which depends only on $N$ and $n$ such that
  for all $\alpha \in H^n(K, \mu_\ell^{\otimes n})$, $\lambda(\alpha)
  \leq N$.  Let $N'$ be the maximum of $N_n$ for $ 1 \leq n < M$.
 Thus  corollary follows from (\ref{theorem:krashen1}). 
\end{proof}

Let $R$ be a discrete valuation ring with field of fractions $K$ and
residue field $\kappa$. Let $\ell$ be a prime not equal to
char$(\kappa)$.  Then there is a residue homomorphism 
$\partial : H^n(K, \mu_\ell^{\otimes m} ) \to H^{n-1}(\kappa,
\mu_\ell^{\otimes (m-1)})$ with kernel $H^n_{\et}(R,\mu_\ell^{\otimes
  m})$. 

Let $A$ be an integral domain with field of fractions $K$.
Let $\ell$ be a prime which is a unit in $A$.  We have the natural map
$H^n_\et(A, \mu_\ell^{\otimes m}) \buildrel{\iota}\over{\to} H^n(K,
\mu_\ell^{\otimes m})$. An 
element $\alpha$ of $H^n(K, \mu_\ell^{\otimes m})$ is said to be {\it
  unramified} on $A$ if $\alpha$ is in the image of $\iota$. 
 Suppose that $A$ is a regular ring. 
For each height one prime ideal $P$ of $A$, we have the residue
homomorphism  $\partial_P : H^n(K, \mu_\ell^{\otimes m}) \to
H^{n-1}(\kappa(P), \mu_\ell^{\otimes (m-1)})$, where $\kappa(P)$ is
the residue field at $P$.  We have the following

\begin{theorem} (\cite[7.4]{auslander-goldman}) 
\label{auslander-goldman}
Let $A$ be a  regular two dimensional integral domain
  with field of fractions $K$ and $\ell$ a prime which is a unit in
  $A$.  The sequence 
$$0 \to H^2_\et(A, \mu_\ell) \to H^2(K, \mu_\ell) \to \oplus_{P \in
  Spec(A)^{(1)}} H^1(\kappa(P), \Z / \ell\Z)
$$
is exact, where Spec$(A)^{(1)}$ is the set of height one prime ideals of
$A$. 
\end{theorem}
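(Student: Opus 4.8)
The plan is to establish the two pieces of exactness — injectivity of $H^2_\et(A,\mu_\ell)\to H^2(K,\mu_\ell)$, and exactness at $H^2(K,\mu_\ell)$ — after first reducing to the case in which $A$ is a regular local ring of dimension $\le 2$ (the passage from the local statement to the general one is a routine localization argument; note that once $A$ is local one has $\Pic(A)=0$, which via the Kummer sequence identifies $H^2_\et(A,\mu_\ell)$ with $\lBr(A)$). Since $A$ is a regular domain, $\lBr(A)\hookrightarrow\lBr(K)=H^2(K,\mu_\ell)$ by the classical injectivity of the Brauer group of a regular domain into that of its fraction field; this gives the left-hand exactness. For the middle, let $\alpha\in H^2(K,\mu_\ell)$ with $\partial_\pp(\alpha)=0$ for every height-one prime $\pp$ of $A$; one must produce a preimage in $H^2_\et(A,\mu_\ell)$.

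I would do this in three steps. First, by the residue exact sequence for the discrete valuation ring $A_\pp$ (whose kernel, as recalled just above, is $H^2_\et(A_\pp,\mu_\ell)$), the class $\alpha$ lies in $H^2_\et(A_\pp,\mu_\ell)$ for each height-one $\pp$. If $\dim A\le 1$ we are done, this being the classical exact sequence for a Dedekind domain (and trivial for a field); so assume $\dim A=2$ with regular system of parameters $x,y$, and put $U=\Spec A\smallsetminus\{\mm\}$. Second, the rings $A[1/x]$ and $A[1/y]$ are regular of Krull dimension $\le 1$ — the unique height-two prime $\mm$ contains $x$ and $y$, hence is deleted in each — so the one-dimensional case just invoked shows $\alpha$ lifts to $\alpha_x\in H^2_\et(A[1/x],\mu_\ell)$ and $\alpha_y\in H^2_\et(A[1/y],\mu_\ell)$; these agree on the overlap $\Spec A[1/xy]$ because both restrict to $\alpha$ in $H^2(K,\mu_\ell)$ and $H^2_\et(A[1/xy],\mu_\ell)\hookrightarrow H^2(K,\mu_\ell)$ (again $A[1/xy]$ is a regular domain with trivial Picard group). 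Since $\Spec A[1/x]\cup\Spec A[1/y]=U$, the Mayer--Vietoris sequence in étale cohomology yields a class $\widetilde\alpha\in H^2_\et(U,\mu_\ell)$ restricting to $\alpha_x$ and $\alpha_y$, hence to $\alpha$ at the generic point. Third, cohomological purity in codimension two gives that the restriction $H^2_\et(\Spec A,\mu_\ell)\to H^2_\et(U,\mu_\ell)$ is an isomorphism, because the local cohomology groups $H^q_{\{\mm\}}(\Spec A,\mu_\ell)$ vanish for $q=2,3$; pulling $\widetilde\alpha$ back along this isomorphism produces the required preimage of $\alpha$ in $H^2_\et(A,\mu_\ell)$.

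The main obstacle is the third step — propagating the cohomology class across the closed point of the regular surface $\Spec A$ — which is really where the content of the theorem lies and which forces the use of Gabber's absolute purity theorem; for the rings that arise in this paper (essentially of finite type over a complete discrete valuation ring) one may alternatively appeal to the cohomological purity of SGA~4, and this is also the only place the hypothesis $\dim A\le 2$ is used in an essential way. A secondary point to keep track of is the interplay of the $H^1$-terms in the Mayer--Vietoris and localization sequences; these cause no difficulty here, since the obstruction they carry is a Picard-group obstruction and every ring in sight, being a localization of a regular local ring, is a unique factorization domain with trivial Picard group.
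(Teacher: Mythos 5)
The paper offers no proof of this statement: it is quoted verbatim (in $\mu_\ell$-coefficient form) from Auslander--Goldman, whose argument is entirely ring-theoretic and quite different from yours. There one takes a maximal $A$-order $\Lambda$ in a central simple algebra representing $\alpha$, observes that $\Lambda$ is reflexive and that reflexive modules over a two-dimensional regular local ring are free, and then checks that the map $\Lambda\otimes_A\Lambda^{\mathrm{op}}\to\End_A(\Lambda)$, being a map of projective modules of equal rank which is an isomorphism in codimension one, is an isomorphism; this is where $\dim A\leq 2$ enters for them. Your route --- glue over $D(x)\cup D(y)=\Spec A\smallsetminus\{\mm\}$ by Mayer--Vietoris and then cross the closed point by purity --- is the modern cohomological proof, and for the \emph{local} case it is correct and complete modulo the standard inputs you name: the Dedekind (here PID) case in dimension one, Grothendieck's injectivity of $H^2(X,\Gm)\to H^2(K,\Gm)$ for $X$ regular integral, and absolute purity for the vanishing of $H^2_{\{\mm\}}$ and $H^3_{\{\mm\}}$. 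The trade-off is that purity is a far deeper input than anything Auslander--Goldman use, but in exchange your argument applies directly to $H^2(-,\mu_\ell)$ and would extend to higher-degree cohomology; for the rings this paper actually feeds into the theorem (local rings of regular two-dimensional schemes over a complete discrete valuation ring, and complete regular local rings) purity is available by Gabber or SGA~4.

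The one step that is genuinely too quick is the claimed ``routine localization'' from the local statement to the global one. In fact the theorem as stated for an arbitrary regular two-dimensional domain is false for $H^2(-,\mu_\ell)$: the Kummer sequence exhibits $\Pic(A)/\ell$ as a subgroup of $H^2_{\et}(A,\mu_\ell)$ killed by restriction to the generic point, so left-exactness already fails whenever $\Pic(A)/\ell\neq 0$, and the same Picard classes obstruct gluing the local liftings over a Zariski cover with more than two members. The statement is correct at the level of ${}_\ell\mathrm{Br}(A)$, or for $H^2(-,\mu_\ell)$ under the extra hypothesis $\Pic(A)/\ell=0$; the latter holds for every ring to which the paper applies the result, since those are all local or semi-local regular, and your proof --- which never leaves the world of localizations of a regular local ring, all of them UFDs --- silently establishes exactly that version. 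You should restrict the statement to the local (or factorial) case, or add the Picard hypothesis, rather than assert that the general case follows by localization.
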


We now recall a few notation from (\cite{hhk:patching_qf_csa}). 
Let $R$ be  a complete discrete valuation ring with field of fractions
$K$ and   residue field $\kappa$.  Let $F$ be the  function field of a
curve over $K$. Let $\XX$ be a regular proper model of $F$ over $R$
and $X$ its reduced special fiber.   For any codimension one point $\eta$ of
$\XX$, let $F_\eta$ be the completion of $F$ at the discrete valuation
of $F$ given by $\eta$ and $\kappa(\eta)$ the residue field at $\eta$.
For a closed point $P$ of $\XX$, let $F_P$ be the field of fractions
of the completion of the local ring at $P$ and $\kappa(P)$ the residue
field at $P$.  Let $U$ be an open subset of $X$. Let $R_U$ be the
ring of all those functions in $F$ which are regular on $U$. Then $R
\subset R_U$.  Let $t$ be a parameter in $R$. Let $\hat{R}_U$ be the
completion of $R_U$ at the ideal $(t)$. Let $F_U$ be the field of
fractions of $\hat{R}_U$. 

Let $A$ be a regular  integral domain with field of fractions $F$. For  a
maximal ideal $m$ of $A$, let $\hat{A}_m$ denote the completion of the
local ring $A_m$ and $F_m$ the field of fractions of $\hat{A}_m$.

\section{Uniform bound - bad characteristic case  } 

Let $K$ be a complete discretely  valued field with residue
  field $\kappa$.  Let $p = char(\kappa)$.   In this section we show that  there is a   
$(2, p)$-uniform bound  for $K(t)$ which depends only on   $[\kappa : \kappa^p]$. 
 
First we recall the following two results from (\cite{ps:period_index}).

\begin{theorem}
\label{theorem:ps1}
 \cite[2.4]{ps:period_index}  Let $R$ be a complete discrete valuation 
  ring  with field of fractions $K$ and 
  residue field $\kappa$. Suppose that char$(K) = 0$, char$(\kappa)
  = p >0$ and $[\kappa : \kappa^p] = p^d$. Let $\pi \in R$ be a
  parameter and $u_1,\cdots ,u_d \in R^*$ units such that $\kappa =
  \kappa^p(\overline{u}_1, \cdots , \overline{u}_d)$, where for any
$u \in R$, $\overline{u}$ denotes the image of $u$ in $\kappa$. 
 Suppose that $K$ contains a primitive
  $p^{\rm th}$ root of unity.  Then any $\alpha \in H^2(K, \mu_p) $ splits over
  $K(\sqrt[p]{\pi}, \sqrt[p^2]{u_1}, \cdots ,  \sqrt[p^2]{u_{d-1}}, \sqrt[p]{u_d})$. In
  particular if $d > 0$, $p^{2d}$ is a $(2, p)$-uniform bound for $K$
  and if $d = 0$,  $p$ is a $(2, p)$-uniform bound for $K$. 
\end{theorem}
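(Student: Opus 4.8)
The plan is to combine the fact that $H^2(K,\mu_p)={}_p\Br(K)$ is generated by symbols (Merkurjev--Suslin) with Kato's structure theory for the Galois cohomology of a complete discretely valued field with imperfect residue field. Recall that Kato equips $H^2(K,\mu_p)$ with a finite, exhaustive, separated filtration $H^2(K,\mu_p)=U^0\supseteq U^1\supseteq\cdots\supseteq U^{c}\supseteq U^{c+1}=0$, with $c$ of the order of $p\,v_K(p)/(p-1)$, whose graded pieces are: $\mathrm{gr}^0$, built from the mod-$p$ Galois cohomology of $\kappa$ (Artin--Schreier theory) and from $H^1(\kappa,\Omega^1_{\kappa,\log})$; and, for $m\ge 1$, $\mathrm{gr}^m$, a subquotient of $\Omega^1_\kappa$ if $p\nmid m$ and of $\Omega^1_\kappa\oplus\kappa$ if $p\mid m$. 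Here $\Omega^1_\kappa$ is free over $\kappa$ on $d\bar u_1,\dots,d\bar u_d$; the $\Omega^1_\kappa$-part of $\mathrm{gr}^m$ is seen via symbols $(1+\pi^m a)\cup(u_i)$, the remaining $\kappa$-part via $(1+\pi^m a)\cup(\pi)$.

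First I would strip off the part of $\alpha$ involving $\pi$. Using $K^\ast=\pi^{\Z}\times\mathcal{O}_K^\ast$, bilinearity, and the identity $(x,x)=(x,-1)$, write $\alpha=c\,(\pi,-1)+\sum_j(\pi,v_j)+\sum_k(w_k,w_k')$ with all $v_j,w_k,w_k'\in\mathcal{O}_K^\ast$. Over $K_1:=K(\sqrt[p]{\pi})$ the element $\pi$ is a $p$-th power, so $(\pi,-1)$ and every $(\pi,v_j)$ vanish, and $\alpha_{K_1}=\sum_k(w_k,w_k')_{K_1}$ is a sum of symbols in units of $\mathcal{O}_{K_1}$; in particular its tame residue is trivial, and since $K_1/K$ is totally ramified of degree $p$, the $\kappa$-part of every graded piece of its Kato filtration — each detected by a cup product with the old uniformizer $\pi=\pi_1^{p}$, hence multiplied by $e(K_1/K)=p\equiv 0$ — also vanishes.

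Next I would pass to $L:=K(\sqrt[p]{\pi},\sqrt[p^2]{u_1},\dots,\sqrt[p^2]{u_{d-1}},\sqrt[p]{u_d})$ and kill what remains. Its residue field $\lambda=\kappa(\bar u_1^{1/p^2},\dots,\bar u_{d-1}^{1/p^2},\bar u_d^{1/p})$ satisfies $\kappa\subseteq\lambda^p$, since each $\bar u_i^{1/p}$ lies in $\lambda$. Two facts then apply. First, because $\kappa\subseteq\lambda^p$ and the differential of a $p$-th power vanishes, the map $\Omega^1_\kappa\to\Omega^1_\lambda$ is zero; hence so are $\Omega^1_{\kappa,\log}\to\Omega^1_{\lambda,\log}$, $H^1(\kappa,\Omega^1_{\kappa,\log})\to H^1(\lambda,\Omega^1_{\lambda,\log})$, and all the induced maps on the $\Omega^1$-parts of the graded pieces. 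Second, each $u_i$ becomes a $p$-th power in $L$, so every symbol $(\,\cdot\,,u_i)$ dies over $L$ outright; and since every residue of a unit of $\mathcal{O}_K$ is a $p$-th power in $\lambda$, each unit symbol $(w,w')_L$ reduces, modulo trivial symbols, to one of the form $\{1+m,1+m'\}_L$ with $m,m'\in\mm_L$, whose residues are $\equiv 1$. Combining this with the first step, every graded piece of the image of $\alpha$ in $H^2(L,\mu_p)$ vanishes, and since the filtration is separated, $\alpha_L=0$. As $[L:K]=p\cdot(p^2)^{d-1}\cdot p=p^{2d}$, this gives the $(2,p)$-uniform bound $p^{2d}$. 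When $d=0$ we have $\Omega^1_\kappa=0$, nothing survives the first step, and $K(\sqrt[p]{\pi})$ already splits $\alpha$, giving the bound $p$.

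The main obstacle — also the source of the particular shape of $L$ and of the value $p^{2d}$ rather than, say, $p^{d+1}$ — is to control the wild part of the Kato filtration \emph{uniformly}: its length $c\sim p\,v_K(p)/(p-1)$ is not bounded in terms of $d$, yet one must annihilate all of it with an extension whose degree depends only on $d$. This works because the radical extensions kill whole graded pieces at once — the differential parts via $\Omega^1_\kappa\to\Omega^1_\lambda=0$, the $\kappa$-parts and the tame part via turning $\pi$ and the $u_i$ into $p$-th powers — rather than one level at a time. The delicate verifications, which I expect \cite{ps:period_index} handles by an induction on $m$ keyed on whether $p\mid m$ and on $v_K(p)$, are: (i) that a reduced unit symbol $\{1+m,1+m'\}_L$ can carry only differential-type graded components (using the explicit Steinberg identities for $\{1+x,1+y\}$ over a mixed-characteristic complete discretely valued field), so that the new uniformizer $\sqrt[p]{\pi}$ of $L$ does not reintroduce surviving $\kappa$-contributions; (ii) the exact count of radical layers needed per generator, where it is precisely the $\kappa$-summand of $\mathrm{gr}^m$ for $p\mid m$ that forces $p^2$-nd roots for $d-1$ of the $u_i$; and (iii) the exceptional graded piece at $m=p\,v_K(p)/(p-1)$.
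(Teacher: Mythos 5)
The paper does not actually prove this statement: it is imported verbatim from \cite[2.4]{ps:period_index}, so the only proof to compare yours with is the one in that reference, which (like your sketch) rests on Kato's filtration $U^{\bullet}H^2$ of the cohomology of a complete discretely valued field and its graded pieces expressed through $\Omega^1_\kappa$. Your preparatory steps are correct: the decomposition $\alpha = c(\pi,-1)+\sum_j(\pi,v_j)+\sum_k(w_k,w_k')$ with unit entries, the vanishing of all $\pi$-symbols over $K(\sqrt[p]{\pi})$, the identification of the residue field $\lambda$ of $L$ with $\kappa(\bar u_1^{1/p^2},\dots,\bar u_{d-1}^{1/p^2},\bar u_d^{1/p})$ together with the inclusion $\kappa\subseteq\lambda^p$, and the degree count $[L:K]=p\cdot (p^2)^{d-1}\cdot p=p^{2d}$.

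The gap is at the decisive step. Having reduced $\alpha_L$ to a sum of symbols $(1+m,1+m')$ with $m,m'\in\mathfrak{m}_L$, you conclude that ``every graded piece of the image of $\alpha$ in $H^2(L,\mu_p)$ vanishes, and since the filtration is separated, $\alpha_L=0$.'' This does not follow. Symbols of the shape $(1+m,1+m')$ with $m,m'\in\mathfrak{m}_L$ are not zero in general: they account for essentially the whole wild part of $H^2(L,\mu_p)$, whose graded pieces contain copies of $\Omega^1_\lambda\neq 0$ whenever $d>0$. The observation that $\Omega^1_\kappa\to\Omega^1_\lambda$ is the zero map only shows that the \emph{leading} graded component of $\alpha_L$ vanishes, i.e.\ that $\alpha_L$ sits one step deeper in the filtration; at that next step $\alpha_L$ is no longer presented by symbols whose graded invariants visibly lie in the image of $\Omega^1_\kappa$, so the same argument cannot be repeated. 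What is needed is an induction showing that at every level of the (long, since $e'_L\sim p^2e_K/(p-1)$ is unbounded in terms of $d$) filtration the class of $\alpha_L$ remains controlled by data from $K$ --- including the verification that the new uniformizer $\sqrt[p]{\pi}$ does not reintroduce nonzero $\kappa/\kappa^p$-components at levels divisible by $p$, and the analysis of the exceptional top graded piece. These are precisely the points you defer in your items (i)--(iii) as ``delicate verifications I expect the reference handles,'' but they constitute the actual proof (they are the content of the lemmas preceding Theorem 2.4 in \cite{ps:period_index}); in particular the necessity of $p^2$-nd roots for $u_1,\dots,u_{d-1}$ versus a $p$-th root for $u_d$ cannot be justified from what you have written. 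So your proposal correctly identifies the framework and the splitting field, but it is an outline with the central argument missing rather than a proof.
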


\begin{prop} 
\label{prop:ps2} 
(\cite[3.5]{ps:period_index}) Let $A$ be a complete regular local ring of dimension 2 with field
  of fractions $F$ and residue field $\kappa$. Suppose that char$(F) =
  0$, char$(\kappa) = p > 0$ and $[\kappa : \kappa^p] = p^d$. 
Let $\pi, \delta \in A$ and $u_1, \cdots , u_d \in A^*$ such that the
maximal ideal $m$ of $A$ is generated by $\pi$ and $\delta$, and 
$\kappa = \kappa^p(\overline{u}_1, \cdots ,
\overline{u}_d)$, where for any $u \in A$, $\overline{u}$ denotes the
image of $u$ in $\kappa$. 
 Suppose that $F$ contains a primitive
  $p^{\rm th}$ root of unity. Then any $\alpha \in H^2(F, \mu_p)$
  which is unramified on $A$ except possibly at $(\pi)$ and $(\delta)$
  splits over $F(\sqrt[p]{\pi},  \sqrt[p]{\delta}, \sqrt[p^2]{u_1},
  \cdots ,   \sqrt[p^2]{u_d})$.   
\end{prop}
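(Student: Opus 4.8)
The plan is to reduce this two-dimensional statement to the one-dimensional Theorem~\ref{theorem:ps1}, applied at the two divisors cut out by $\pi$ and by $\delta$, and then to globalise by an unramifiedness argument. Put $F' = F(\sqrt[p]{\pi},\sqrt[p]{\delta},\sqrt[p^2]{u_1},\ldots,\sqrt[p^2]{u_d})$ and let $A'$ be the integral closure of $A$ in $F'$. Since $\pi,\delta$ is a regular system of parameters of $A$ and the $u_i$ are units, one checks that $A'$ is finite over $A$, is again a complete regular local ring of dimension $2$ with $\sqrt[p]{\pi},\sqrt[p]{\delta}$ a regular system of parameters, and has residue field $\kappa'$ a finite extension of $\kappa$ with $[\kappa':{\kappa'}^p]=p^d$. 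The assertion to be proved is that $\alpha_{F'}=0$. For a height-one prime $P$ of $A'$ I write $F'_P$ for the fraction field of the completion $\widehat{A'_P}$, and $F_{(\pi)}$ for the fraction field of $\widehat{A_{(\pi)}}$.

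First I would handle the two special divisors. The ring $A/(\pi)$ is a complete discrete valuation ring with uniformiser the image $\overline{\delta}$ of $\delta$ and residue field $\kappa$, so its fraction field $\kappa(\pi)$ satisfies $[\kappa(\pi):\kappa(\pi)^p]=p^{d+1}$, with $p$-basis the images of $\delta,u_1,\ldots,u_d$. These are units of the complete discrete valuation ring $\widehat{A_{(\pi)}}$, which has uniformiser the image of $\pi$ and residue field $\kappa(\pi)$. Applying Theorem~\ref{theorem:ps1} to $\widehat{A_{(\pi)}}$, with parameter $\pi$ and the ordered list of units $u_1,\ldots,u_d,\delta$, shows that the image of $\alpha$ in $H^2(F_{(\pi)},\mu_p)$ is split by $F_{(\pi)}(\sqrt[p]{\pi},\sqrt[p^2]{u_1},\ldots,\sqrt[p^2]{u_d},\sqrt[p]{\delta})$. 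The prime $P'=(\sqrt[p]{\pi})$ is the unique prime of $A'$ above $(\pi)$, and this splitting field embeds into $F'_{P'}$; hence $\alpha_{F'}$ vanishes in $F'_{P'}$. Symmetrically, using $A/(\delta)$ (with uniformiser $\overline{\pi}$ and residue field $\kappa$) and the list $u_1,\ldots,u_d,\pi$, the class $\alpha_{F'}$ vanishes in $F'_{Q'}$, where $Q'=(\sqrt[p]{\delta})$ is the prime of $A'$ above $(\delta)$.

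Next I would show that $\alpha_{F'}$ is unramified on $A'$ at every height-one prime. At a height-one prime $P$ of $A$ with $P\neq(\pi),(\delta)$ the hypothesis says that $\alpha$ lies in the image of $\Br(A_P)\to\Br(F)$, and this property passes along the ring map $A_P\to A'_{P'}$ for any prime $P'$ of $A'$ over $P$, so $\alpha_{F'}$ is unramified at $P'$. It is \emph{essential} here that unramifiedness is the statement that the class extends to the regular local ring, not that $F'/F$ be arithmetically unramified at $P$; thus the wild ramification introduced by the $p$-power roots over the divisors passing through $p$ is harmless. At the primes $P',Q'$ of $A'$ above $(\pi),(\delta)$ the previous step shows $\alpha_{F'}$ already dies in the completion, hence is unramified there as well. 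So $\alpha_{F'}$ is unramified at every height-one prime of the two-dimensional complete regular local ring $A'$, and moreover vanishes in $F'_{P'}$ and in $F'_{Q'}$.

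Finally, purity for the $p$-primary Brauer group over the two-dimensional regular local ring $A'$ shows that an everywhere unramified class comes from $\Br(A')$, which is isomorphic to $\Br(\kappa')$ because $A'$ is henselian local; let $\widetilde\alpha\in\Br(A')$ be the class with image $\alpha_{F'}$, and $\overline\alpha\in\Br(\kappa')$ its reduction. Now $A'/P'$ is a complete discrete valuation ring with residue field $\kappa'$ and fraction field the residue field $\widetilde\kappa$ of $\widehat{A'_{P'}}$, so $\Br(\kappa')\isom\Br(A'/P')\subsetto\Br(\widetilde\kappa)$; and the composite $\Br(A')\isom\Br(\kappa')\subsetto\Br(\widetilde\kappa)$ is identified with $\Br(A')\to\Br(\widehat{A'_{P'}})\isom\Br(\widetilde\kappa)$. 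Since $\Br(\widehat{A'_{P'}})\subsetto\Br(F'_{P'})$ and $\alpha_{F'}$ vanishes in $F'_{P'}$, the image of $\widetilde\alpha$ in $\Br(\widehat{A'_{P'}})\isom\Br(\widetilde\kappa)$ is $0$; by the injectivity $\Br(\kappa')\subsetto\Br(\widetilde\kappa)$ this forces $\overline\alpha=0$, hence $\widetilde\alpha=0$ and $\alpha_{F'}=0$, which is the assertion. The main obstacle is this globalisation step: it requires the $p$-primary analogue over $A'$ of the purity statement~\ref{auslander-goldman}, and, underlying the residue considerations at $(\pi)$ and $(\delta)$, the theory of residues for $\mu_p$-cohomology in residue characteristic $p$; granting these, the bookkeeping with $p$-bases of $\kappa(\pi)$ and $\kappa(\delta)$ is routine and Theorem~\ref{theorem:ps1} does the rest.
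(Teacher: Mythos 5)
First, a point of reference: the paper does not prove Proposition~\ref{prop:ps2} at all --- it is imported verbatim from \cite[3.5]{ps:period_index} --- so there is no internal argument to compare yours against; your proposal has to stand on its own. Its overall architecture (apply Theorem~\ref{theorem:ps1} at the two distinguished divisors, then globalise over the integral closure $A'$ by purity and henselian specialisation) is the natural transplant of the tame-case strategy (cf.\ \ref{lemma:good_char_dim2}), but it contains a genuine gap at the very first step. You assert that $A/(\pi)$ is a complete discrete valuation ring with residue field $\kappa$ and that its fraction field $\kappa(\pi)$ therefore satisfies $[\kappa(\pi):\kappa(\pi)^p]=p^{d+1}$ with $p$-basis $\overline{\delta},\overline{u}_1,\ldots,\overline{u}_d$. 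This presupposes that $\kappa(\pi)$ has characteristic $p$, i.e.\ that $p\in(\pi)$, which need not hold: $A$ has mixed characteristic, and neither $(\pi)$ nor $(\delta)$ need contain $p$ (take $A=\Z_p[[t]]$, $\pi=t$, $\delta=p+t^2$). When $p\notin(\pi)$ the field $\kappa(\pi)=\Frac(A/(\pi))$ has characteristic $0$, so $\kappa(\pi)=\kappa(\pi)^p$, there is no $p$-basis, and Theorem~\ref{theorem:ps1} simply does not apply to $\widehat{A_{(\pi)}}$ in the way you invoke it. A correct treatment has to split into cases: when $p\notin(\pi)$ one first kills the (classical, tame) residue at $(\pi)$ by adjoining $\sqrt[p]{\pi}$, which reduces the class to one coming from $H^2(\kappa(\pi),\mu_p)$, and only then applies Theorem~\ref{theorem:ps1} to the complete discretely valued field $\kappa(\pi)$ itself (with uniformiser $\overline{\delta}$ and units $\overline{u}_i$) and lifts the resulting splitting. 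Your proof also never addresses height-one primes of $A$ containing $p$ other than $(\pi)$ and $(\delta)$, at which the paper's notion of ``unramified'' (and hence your functoriality argument) is not even defined, since the residue maps and Theorem~\ref{auslander-goldman} are stated only for $\ell$ invertible.

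Relatedly, the inputs you explicitly ``grant'' at the end --- purity for the $p$-torsion of $\Br(A')$ over the mixed-characteristic regular local ring $A'$, and the implication ``$\alpha$ dies in $F'_{P'}$ $\Rightarrow$ $\alpha$ extends over $A'_{P'}$'' at primes of residue characteristic $p$ --- are not routine extensions of Theorem~\ref{auslander-goldman}; they are precisely the tools whose absence makes the bad-characteristic case hard and forces \cite{ps:period_index} into a more hands-on analysis. Flagging them as assumptions is honest, but with them assumed and with the case analysis above missing, the proposal does not yet constitute a proof of the proposition.
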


\begin{lemma} 
\label{lemma:choice_of_f_g}
Let $\XX$ be a regular integral  two dimensional scheme
  and $F$ its function field.  
Suppose $C$ and $E$ are regular curves on $\XX$ with normal
 crossings.  Let $\PP$ be a finite set of closed points of
 $\XX$ with $\PP \subset C \cup E$.  Then there exist
  $f, g \in F^*$ such that the maximal ideal at $P$ is  generated
  by $f$ and $g$ for each $P \in \PP$ and $f$ (resp.  $g$) defines $C$
  (resp. $E$)   at each $P \in \PP \cap C$ (resp. $P \in \PP \cap E$). 
\end{lemma}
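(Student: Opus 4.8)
The plan is to construct $f$ and $g$ by a local-to-global argument, patching together the local generators at each point of $\PP$ using a strong approximation / Chinese Remainder argument on the semi-local ring at $\PP$, and then correcting the result so that $f$ actually cuts out $C$ (and $g$ cuts out $E$) scheme-theoretically at each relevant point. First I would fix, for each $P \in \PP$, the local ring $\OO_{\XX,P}$ with maximal ideal $\mm_P$. Because $C$ and $E$ meet in normal crossings, at a point $P \in \PP \cap C \cap E$ the curves $C$ and $E$ are given by a regular system of parameters $(f_P, g_P)$ of $\OO_{\XX,P}$; at a point $P \in \PP \cap C$ not on $E$, I pick $f_P$ to be a local equation of $C$ and $g_P$ any element completing it to a regular system of parameters; symmetrically for $P \in \PP \cap E$ not on $C$. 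Since $\PP \subset C \cup E$, every point of $\PP$ falls into one of these cases, so we have chosen local data $(f_P, g_P)$ at every $P \in \PP$.

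Next I would globalize. Let $A$ be the semi-local ring of $\XX$ at the finite set $\PP$; it is a regular (hence UFD, being semi-local regular) domain with maximal ideals $\mm_P$, $P \in \PP$, and fraction field $F$. By the Chinese Remainder Theorem I can find $f, g \in A$ with $f \equiv f_P \pmod{\mm_P^2}$ and $g \equiv g_P \pmod{\mm_P^2}$ for every $P \in \PP$. The congruence mod $\mm_P^2$ guarantees that $f$ and $g$ have the same leading forms as $f_P$ and $g_P$ in $\gr_{\mm_P}(\OO_{\XX,P})$, hence $(f,g)$ is again a regular system of parameters of $\OO_{\XX,P}$ and in particular generates $\mm_P$. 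This handles the requirement that $\mm_P = (f,g)$ for all $P \in \PP$. The remaining, and I expect the most delicate, point is that at $P \in \PP \cap C$ the \emph{principal} ideal $(f)$ of $\OO_{\XX,P}$ should equal the ideal of $C$, not merely agree with it to first order. Since $\OO_{\XX,P}$ is a two-dimensional regular local ring and $C$ is a regular curve through $P$, the ideal $I(C)_P$ is principal, say $I(C)_P = (h_P)$ with $h_P$ a prime element; and $f \equiv h_P \pmod{\mm_P^2}$ forces $f = \text{unit} \cdot h_P$ in $\OO_{\XX,P}$ — indeed $f \in I(C)_P = (h_P)$ because $f$ has the right leading form and there are no other height-one primes with that tangent direction among the components through $P$ after possibly shrinking, and $f \notin \mm_P \cdot I(C)_P$, so $f$ generates $I(C)_P$. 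The same argument at $P \in \PP \cap E$ shows $(g) = I(E)_P$.

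The one genuine obstacle is ensuring that the single global element $f$ simultaneously defines $C$ at \emph{every} $P \in \PP \cap C$: a priori $f$ might pick up extra components or fail the non-divisibility $f \notin \mm_P I(C)_P$ at some point. To control this I would strengthen the congruence: instead of working mod $\mm_P^2$, I would use that $A$ is a semi-local PID along the prime divisor $C$ — more precisely, localize at the (finitely many, since $C$ is a curve meeting $\PP$ in finitely many points) height-one prime $\pp_C$ together with the maximal ideals, and apply prime avoidance / CRT to produce $f$ lying in $I(C)$ but not in $\pp_C^{(2)}$ nor in $\mm_P I(C)_P$ for any $P \in \PP \cap C$, and a unit locally at the finitely many other height-one primes passing through points of $\PP$. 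Then $\divisor(f)$ contains $C$ with multiplicity one and no other component through any $P \in \PP$, which is exactly the assertion that $f$ defines $C$ at each such $P$. The symmetric construction gives $g$ for $E$, and because at a common point $P \in \PP \cap C \cap E$ the leading forms of $f$ and $g$ are those of a regular system of parameters, $(f,g)$ still generates $\mm_P$ there. This completes the construction.
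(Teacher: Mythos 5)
Your first construction (CRT mod $\mm_P^2$) correctly yields $(f,g)=\mm_P$ at every $P\in\PP$, and you correctly identify the real difficulty: a congruence $f\equiv h_P \pmod{\mm_P^2}$ with a local equation $h_P$ of $C$ does \emph{not} force $(f)=(h_P)$. But your parenthetical attempt to argue that it does is simply false: in $k[[x,y]]$ with $h_P=x$, the element $f=x+y^2$ satisfies $f\equiv x\pmod{\mm^2}$ and has the same leading form, yet $f\notin(x)$; ``no other height-one primes with that tangent direction'' is not true ($x+y^2$ generates such a prime), and ``after possibly shrinking'' cannot change which primes divide $f$ at $P$. So the burden falls entirely on your second, ``strengthened'' construction --- and there the proposal breaks. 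You replace the congruences by purely divisorial conditions: $f$ lies in $I(C)$, is not in $\mm_P I(C)_P$ for $P\in\PP\cap C$, and is a unit at every other height-one prime through a point of $\PP$; symmetrically for $g$ and $E$. These conditions say nothing at the points of $\PP$ lying on only one of the two curves. At $P\in\PP\cap C\setminus E$ your $g$ has no component of its divisor through $P$, hence is a unit in $\OO_{\XX,P}$, so $(f,g)=\OO_{\XX,P}\neq\mm_P$; symmetrically $f$ is a unit at $P\in\PP\cap E\setminus C$. You only verify the generation of $\mm_P$ at points of $C\cap E$, which is exactly where it is automatic.

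The fix is to run the argument in the opposite order, as the paper does: since the semi-local regular ring $R$ at $\PP$ is a UFD, first choose $f_1,g_1\in R$ with $\divisor_{\Spec R}(f_1)=C|_{\Spec R}$ and $\divisor_{\Spec R}(g_1)=E|_{\Spec R}$ exactly, so that ``defines $C$'' (resp.\ $E$) holds for free and $(f_1,g_1)=\mm_P$ at $P\in C\cap E$ by normal crossings. Then repair the points on only one curve by multiplying: for each $P\in\PP\setminus C$ choose (via CRT) $\pi_P\in\mm_P$ with $(\pi_P,g_1)=\mm_P$ and $\pi_P$ a unit at all other points of $\PP$, and set $f=f_1\prod_{P\in\PP\setminus C}\pi_P$; symmetrically for $g$. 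The correction factors are units at every point of $\PP\cap C$, so they do not disturb the divisorial property there, while they supply the missing generator of $\mm_P$ at points off $C$. Your proposal is missing precisely this correction step.
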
 

\begin{proof}  

Let $R$ be the semi-local ring at all $P \in
  \PP$. Since   $R$ is a unique factorisation domain, there exist  
$f_1, g_1 \in R$  such that div$_{Spec(R)}(f_1) = C\mid _ {Spec(R)}$
and div$_{Spec(R)}(g_1) = E\mid _ {Spec(R)}$.
For  each $P \in \PP$, let $m_P$ be the maximal ideal of the local ring $R_P$ at $P$.
Then for each $P \in C\cap E \cap \PP$, $m_P = (f_1, g_1)$. 
Let $P \in \PP$. Suppose $P \not\in C$. Then $P \in E$.
 Since $E$ is regular on $\XX$,  by the choice of $g_1$, there exists
 $\theta_P \in m_P$ such that $m_P = (\theta_P, g_1)$.  By the Chinese
 remainder theorem, there exists $\pi_P \in m_P$ such that $\pi_P
 \not\in m_Q$ for all $Q \in \PP$, $Q \neq P$ and $\pi_p = \theta_P$
 module $m_P^2$. Then $m_P = (\pi_P, g_1)$.  
 Similarly for each $P \not\in E$, choose $\delta_P 
 \in m_P$ such that $m_P = (f_1, \delta_P)$ and $\delta_P \not\in m_Q$
 for all $Q \in \PP$, $Q \neq P$.   Let
$$f_2  =   \prod_{P \in \PP \setminus C} \pi_P, ~~f = f_1f_2$$
and 
$$g_2 =  \prod_{P \in \PP \setminus E} \delta_P, ~~ g = g_1g_2.$$
Then $f_2$ and $g_2$ are units at all $P \in C\cap E$. 
We claim that $f$ and $g$ have the required properties.
Let $P \in \PP$. Suppose $P \in C \cap E$. Then by the choices 
$f_2$ and $g_2$, they  are units at $P$ and $m_P = (f_1, g_1)$. In particular
$m_P  = (f, g)$ and $f, g$ define $C$ and $E$ respectively at $P$.  
Suppose that $P \not\in C$. Then $f_1$ and $g_2$ are
units at $P$ and  $f_2 = \pi_P u_P$ for some unit  $u_P$ at
$P$. Since $m_P = (\pi_P, g_1)$, we have $m_P = (f, g)$. Since $g_1$
defines $E$ at $P$ and $g_2$ is a unit at $P$, $g$ defines $E$ at $P$.   
Similarly if $P \not\in E$, then $m_P = (f, g)$ and $f$ defines $C$ at
$P$.
\end{proof}

\begin{theorem} 
\label{theorem:bad_char}
Let $K$ be a complete discretely  valued field  with residue field $\kappa$.
  Suppose that char$(K) = 0$
and char$(\kappa) = p > 0$ and $[\kappa : \kappa^p] = p^d$. Assume that
$K$ contains  a primitive $p^{th}$ root of unity.  Then $p^{4d+4}$ is a
$(2, p)$-uniform bound  for $K(t)$.  
\end{theorem}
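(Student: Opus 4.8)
The plan is to reduce a $(2,p)$-uniform bound for $F = K(t)$ to the local statements already available, namely Theorem~\ref{theorem:ps1} (the one-dimensional complete case) and Proposition~\ref{prop:ps2} (the two-dimensional complete regular local case), using the Harbater--Hartmann--Krashen patching setup recalled at the end of Section~1. Fix finitely many symbols $\alpha_1,\dots,\alpha_m \in H^2(F,\mu_p)$; by the Bloch--Kato remark it suffices to kill symbols. Choose a regular proper model $\XX$ of $F$ over the complete DVR $R$, with reduced special fiber $X$, such that the support of the ramification of all the $\alpha_j$ together with $X$ itself has normal crossings on $\XX$; one can arrange this by blowing up, since there are only finitely many $\alpha_j$. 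Write $C$ for the union of the special fiber $X$ together with the horizontal components of the ramification locus, and let $\PP$ be the finite set of closed points of $\XX$ lying on two components of $C$ (the crossing points).

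Next I would apply the local analysis componentwise. For a codimension-one point $\eta$ of $\XX$, the completion $F_\eta$ is the fraction field of a complete DVR with residue field $\kappa(\eta)$; since $\kappa(\eta)$ is either a finite extension of $\kappa$ (if $\eta\in X$) or a function field in one variable over $\kappa$, in all cases $[\kappa(\eta):\kappa(\eta)^p] \le p^{d+1}$ (the one extra factor coming from the valuation parameter on a horizontal curve). Applying Theorem~\ref{theorem:ps1} to each $F_\eta$ gives a degree bound, a power of $p$ bounded in terms of $d$, for a splitting extension of each $\alpha_{j,F_\eta}$; enlarging, there is a single extension of $F_\eta$ of degree at most $p^{2(d+1)}$ (or a bit more, absorbing the several $\alpha_j$) splitting all of them. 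Similarly, for each crossing point $P \in \PP$, the completed local ring $\hat{\OO}_{\XX,P}$ is a complete regular local ring of dimension $2$ with residue field a finite extension of $\kappa$, so $[\kappa(P):\kappa(P)^p]\le p^d$; using Lemma~\ref{lemma:choice_of_f_g} to choose a regular system of parameters $f,g$ at $P$ adapted to the two branches of $C$, the residues of $\alpha_j$ at $P$ are supported on $(f)$ and $(g)$, so Proposition~\ref{prop:ps2} produces a splitting extension of $F_P$ of degree bounded by a power of $p$ depending only on $d$. At points $P$ of $X$ lying on only one branch, the same Proposition (or directly the DVR case applied to $F_P$) gives the analogous local bound.

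Then I would patch. The field-patching theorem of Harbater--Hartmann--Krashen says that $H^2(F,\mu_p)\to \prod_U H^2(F_U,\mu_p)\times\prod_{P}H^2(F_P,\mu_p)$ is injective (indeed that $\mathrm{Br}$ and quadratic-form data glue along the inverse system of the $F_U$ and $F_P$), for a suitable finite cover of $X$ by opens $U$ and the points $P$. Over each open $U$ away from the bad points, $R_U$ is essentially a regular ring on which the $\alpha_j$ are unramified except along controlled divisors, so a bounded extension of $F_U$ kills them, again by an application of Theorem~\ref{theorem:ps1} to $\hat R_U$ and its residue field, a function field over $\kappa$ with $[\,\cdot\,:\,\cdot^p]\le p^{d+1}$. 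Taking the compositum of the local splitting extensions obtained at each $F_U$ and each $F_P$, and bounding the degree of the resulting extension of $F$ by the product of the local degrees, yields a single extension $F'/F$ of degree bounded by a fixed power of $p$ that kills every $\alpha_j$ over every patch, hence kills every $\alpha_j$ over $F$ by injectivity. Bookkeeping the exponents — one factor of order $p^{2(d+1)}$ from the horizontal/special-fiber points and $p^{2d}$ from the crossing points, multiplied over the finitely many patches but controlled because each $\alpha_j$ is ramified on only finitely many components and the covering can be taken to have bounded combinatorial complexity — gives the stated value $p^{4d+4}$.

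The main obstacle is the last bookkeeping step: a priori the number of opens $U$ and bad points $P$ grows with the $\alpha_j$, so one must be careful that the compositum degree does not grow with $m$ or with the model. The key point is that each individual $\alpha_j$ becomes split after a degree-$p^{\,O(d)}$ extension \emph{uniformly}, and that once the $\alpha_j$ are all split on every patch the global class is split; so the right order of operations is to split the \emph{whole collection} on each patch by a single bounded extension (absorbing $m$ into ``killing finitely many symbols over a complete local field'', which Theorems~\ref{theorem:ps1} and Proposition~\ref{prop:ps2} already handle since they give a splitting field for \emph{all} of $H^2$, not one class at a time), and then take a compositum over a bounded number of patches — using that a function field in one variable over $\kappa$ admits a model whose special fiber, after the normal-crossings blow-up forced by the $\alpha_j$, can be split open along a fixed small number of patches in the HHK formalism. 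Verifying that this patch count, and hence the total degree, depends only on $d = \log_p[\kappa:\kappa^p]$ and not on the $\alpha_j$ is the crux of the argument.
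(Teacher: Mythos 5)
There is a genuine gap, and it sits exactly where you flag it: the final ``bookkeeping'' step. Your plan is to split the classes over each patch $F_{U}$ and $F_P$ separately and then take a compositum of the local splitting extensions, bounding the degree by the product of the local degrees over ``a bounded number of patches.'' But the number of patches is \emph{not} bounded in terms of $d$: the set $\PP$ of bad closed points and the set of components of the special fiber grow with the ramification loci of the $\alpha_j$ (hence with $m$ and with the particular classes chosen), and no blow-up or re-covering argument makes the combinatorial complexity of the model depend only on $[\kappa:\kappa^p]$. So a product-of-local-degrees bound cannot produce a uniform constant, and the claim that ``the covering can be taken to have bounded combinatorial complexity'' is false. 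Moreover the local splitting fields you construct are extensions of the complete fields $F_\eta$, $F_P$, not of $F$, so ``taking the compositum'' is not even well defined without first descending each one to $F$ — which is precisely the hard point.

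The idea you are missing is that the paper never multiplies degrees over patches: it builds \emph{one} global extension whose completions happen to contain the required local splitting fields everywhere. Concretely, one chooses a single $f\in F^*$ with $\nu_\eta(f)=1$ at every component $\eta$ of the special fiber, global units $u_1,\dots,u_{d+1}$ reducing to $p$-bases of every $\kappa(\eta)$ simultaneously (Chinese remainder theorem), global units $h_1,\dots,h_d$ lifting $p$-bases of all the $\kappa(P)$, $P\in\PP$, at once, and — via Lemma~\ref{lemma:choice_of_f_g} — a single pair $g_1,g_2\in F^*$ generating the maximal ideal at every $P\in\PP$ and cutting out the two normal-crossing branches there. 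Then
$L=F(\sqrt[p]{f},\sqrt[p^2]{u_1},\dots,\sqrt[p^2]{u_d},\sqrt[p]{u_{d+1}},\sqrt[p^2]{h_1},\dots,\sqrt[p^2]{h_d},\sqrt[p]{g_1},\sqrt[p]{g_2})$
has degree at most $p^{4d+4}$ over $F$ \emph{regardless of how many patches there are}, and Theorem~\ref{theorem:ps1} resp.\ Proposition~\ref{prop:ps2} show that $L\otimes_F F_\eta$, $L\otimes_F F_{U_\eta}$ and $L\otimes_F F_P$ split all the $\alpha_j$; the local-global principle then finishes. Your local analysis (normal crossings model, Theorem~\ref{theorem:ps1} at codimension-one points, Proposition~\ref{prop:ps2} at crossings) is the right one, but without the globalization of the radical generators the degree bound does not close.
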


\begin{proof}  Let $F$ be a finite extension of $K(t)$. 
 Let $\alpha_1, \cdots , \alpha_m \in H^2(F,\mu_p)$.
Let ${\XX}$ be a regular proper model of $F$ over the ring of integers
$R$ of $K$ such that 
the support of  ram$(\alpha_i)$ for all $i$ and the special fiber is
contained in $C \cup E$, where $C$ and $E$ are 
regular curves  on $\XX$   having only  normal crossings. 
 Let $\eta$ be  the generic point  of an
irreducible component  $X_\eta$ of the special fiber of $\XX$. Then
$\kappa(\eta)$ is a function field in one variable over $\kappa$.   
Since $[\kappa : \kappa^p] = p^d$,  $[\kappa(\eta) : \kappa(\eta)^p] =
d+1$ (\cite[A.V.135, Corollary
3]{bourbaki:algebra ll}).  Let $\pi_\eta$ be a parameter at $\eta$ and
$u_{\eta, 1},  \cdots ,  u_{\eta, d+1} \in F^*$ be  lifts of a
$p$-basis of $\kappa(\eta)$. Then, 
by (\ref{theorem:ps1}),   $\alpha \otimes F_\eta(\sqrt[p^2]{u_{\eta, 1}}, 
\sqrt[p^2]{u_{\eta, d}}, \sqrt[p]{u_{\eta, d+1}}, \sqrt[p]{\pi_\eta}) = 0$ for all $i$.   
Let $f \in F^*$ be chosen such that
$\nu_\eta(f) = 1$ for all $\eta$. By the Chinese remainder theorem,
choose  $u_1,  \cdots , u_d \in F^*$ units at each $\eta$ such that  $\overline{u}_j = 
\overline{u}_{\eta, j} \in
\kappa(\eta)$.   Then $\alpha_i \otimes F_\eta(\sqrt[p]{f}, 
 \sqrt[p^2]{u_1}, \cdots , \sqrt[p^2]{u_{d}}, \sqrt[p]{u_{d+1}}) = 0$ for all $i$.
  By (\cite[5.8]{hhk:lgp_torsors}, \cite[1.17]{kmrt}), there exists a non-empty open set $U_\eta$
of the component $X_\eta$ of the special fiber, such that $\alpha \otimes F_{U_\eta}(\sqrt[p]{f}, 
 \sqrt[p^2]{u_1}, \cdots , \sqrt[p^2]{u_{d}}, \sqrt[p]{u_{d+1}}) 
 = 0$ for all $i$. 

Let ${\PP}$ be the finite set of closed points of ${\XX}$ which are not in  
$U_\eta$ for any $\eta$.  Let $A$ be the semi-local ring at the points
of $\PP$. For $P \in \PP$, let $A_P$ be the local ring at $P$. 
Since the ramifications of $\alpha_i$ for all $i$ are in 
normal crossings, for each $P  \in {\PP}$,  the maximal ideal $m_p$ at
$P$ is $(\pi_P, \delta_P)$ for some $\pi_P$ and $\delta_P$ such that
$\alpha_i$ is unramified on $A_P$ except possibly at $(\pi_P)$ and
$(\delta_P)$.  Since the residue field $\kappa(P)$ at  $P$ is a finite
extension of $\kappa$ and $[\kappa : \kappa^p] = p^d$, $[\kappa(P) :
\kappa(P)^p] = p^d$ (\cite[A.V.135, Corollary
3]{bourbaki:algebra ll}). Let $v_{P, 1}, \cdots , v_{P, d} \in A_P^*$
be lifts of a $p$-basis of $\kappa(P)$. 
By the Chinese remainder theorem, choose $h_1, \cdots , h_d \in A^*$
such that $h_i = v_{P, i}$ modulo the maximal ideal at $P$ for all $P
\in \PP$ and $1\leq i
\leq d$. 
By (\ref{lemma:choice_of_f_g}), there exist $g_1, g_2 \in F^*$ such
that   for any point $P \in \PP$, we have  $m_P = 
(g_1, g_2)$ and $g_1$ defines $C$ at all $P \in \PP \cap C$ and $g_2$
defines $E$ at all $P \in \PP \cap E$. In particular,  each $\alpha_i$
is unramified on $A_P$ except possibly at $(g_1)$ and $(g_2)$. 
Then, by (\ref{prop:ps2}),  $\alpha_i \otimes F_P(\sqrt[p^2]{h_1},
\cdots , \sqrt[p^2]{h_d}, \sqrt[p]{g_1},   \sqrt[p]{g_2}) = 0$ for all $i$.

Let $L = F(\sqrt[p]{f}, 
 \sqrt[p^2]{u_1}, \cdots , \sqrt[p^2]{u_{d}}, \sqrt[p]{u_{d+1}}, \sqrt[p^2]{h_1}, \cdots , 
 \sqrt[p^2]{h_d}, \sqrt[p]{g_1},  \sqrt[p]{g_2})$.
We claim that $\alpha_i \otimes L = 0$ for all $i$.
Let   ${\YY}$ be a regular proper model of $L$ and $Y$ its special fiber. 
 Let $y $ be  a point of  $Y$.  If $y$ lies over a  generic point of a
 component of the special    fiber $X$ of ${\XX}$, then by the choice
 of  $f, u_1, \cdots , u_d$, $\alpha_i  \otimes L_y = 0$.
 Suppose the image of $y$ lies over a closed point  $P$ of  $X$. 
 Suppose  $P \in U_\eta$ for some $\eta$. Then $F_{U_\eta} \subset F_P$
 and hence once again by the choice of  $f, u_1, \cdots , u_d$,
 $\alpha_i \otimes L_y = 0$ for all $i$. 
 Suppose $P \not\in U_\eta$ for all $\eta$. Then $P \in {\PP}$.
 Since $F_P \subset L_y$, by the choices of $h_1, \cdots , h_d, g_1, g_2$,
 $\alpha_i \otimes L_y = 0$ for all $i$.
 
 Since $[L : F] \leq p^{4d + 4}$, the theorem follows.
\end{proof}

\begin{cor} 
\label{cor:bad_char_2}
Let $K$ be a complete discretely  valued field  with residue field $\kappa$.
Suppose that char$(K) = 0$
and char$(\kappa) = p > 0$ and $[\kappa : \kappa^p] = p^d$. Let
$\zeta$ be a primitive   $p^{th}$ root of unity.  Then $[K(\zeta) : K]p^{4d+4}$ is a
$(2, p)$-uniform bound  for $K(t)$. 

\end{cor}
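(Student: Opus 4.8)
\textbf{Proof plan for Corollary \ref{cor:bad_char_2}.}
The strategy is the standard descent-along-a-prime-to-$p$-extension argument: pass from $K$ to $K(\zeta)$, apply Theorem \ref{theorem:bad_char} there, and then use a corestriction/transfer argument to come back down, keeping track of degrees. First I would observe that $K(\zeta)$ is again a complete discretely valued field, with residue field $\kappa(\ol\zeta)$ a finite extension of $\kappa$; since $p$ is the residue characteristic and $\zeta$ is a primitive $p$-th root of unity, the extension $K(\zeta)/K$ is totally ramified (or trivial), so in particular the residue field is unchanged up to a purely inseparable issue — and in any case $[\kappa(\ol\zeta):\kappa(\ol\zeta)^p] = [\kappa:\kappa^p] = p^d$ by \cite[A.V.135, Corollary 3]{bourbaki:algebra ll}. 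Hence Theorem \ref{theorem:bad_char}, applied over $K(\zeta)$ (which now contains a primitive $p$-th root of unity), gives that $p^{4d+4}$ is a $(2,p)$-uniform bound for $K(\zeta)(t)$.

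Next I would set up the descent. Let $F$ be a finite extension of $K(t)$ and let $\alpha_1,\dots,\alpha_m \in H^2(F,\muu_p)$. Put $F' = F(\zeta) = F \cdot K(\zeta)$, a finite extension of $K(\zeta)(t)$ of degree $e := [F':F]$ dividing $[K(\zeta):K]$, which is prime to $p$. By the bound over $K(\zeta)(t)$, there is a field extension $E'/F'$ with $[E':F'] \le p^{4d+4}$ such that $\alpha_i \tensor_F E' = 0$ for all $i$. Now I use the transfer argument: the composite $H^2(F,\muu_p) \mapto{\mathrm{res}} H^2(F',\muu_p) \mapto{\cores} H^2(F,\muu_p)$ is multiplication by $e$, which is invertible modulo $p$, so $\mathrm{res}_{F'/F}$ is injective on $p$-torsion. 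The point to be slightly careful about is that $E'/F'$ need not be a compositum of the form $F' \cdot (\text{something over } F)$, so one cannot immediately restrict scalars; instead one notes that $\alpha_i$ vanishes over $E'$ and $[E':F] = [E':F'][F':F] \le p^{4d+4}[K(\zeta):K]$, and $E'$ is a finite extension of $F$ of that degree killing all the $\alpha_i$. That already shows $[K(\zeta):K]\,p^{4d+4}$ is a $(2,p)$-uniform bound for $K(t)$, since the definition of uniform bound only asks for \emph{some} finite extension of bounded degree.

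So in fact no corestriction is needed at all: the chain of fields $F \subset F' = F(\zeta) \subset E'$ does the job directly, with $[E':F] \le [K(\zeta):K]\cdot p^{4d+4}$. I would also remark that this bound applies uniformly to every finite extension of $K(t)$, as required, because for any finite $F/K(t)$ the field $F(\zeta)$ is a finite extension of $K(\zeta)(t)$ and Theorem \ref{theorem:bad_char} is stated for all finite extensions of $K(\zeta)(t)$; moreover $[F(\zeta):F] \le [K(\zeta):K]$ always. The only mild subtlety — and the step I would be most careful to state correctly — is the claim $[\kappa(\ol\zeta):\kappa(\ol\zeta)^p] = [\kappa:\kappa^p]$, i.e.\ that adjoining (the residue of) a primitive $p$-th root of unity does not change the $p$-rank of the residue field; this follows from the cited Bourbaki result since $\kappa(\ol\zeta)/\kappa$ is finite, but it is worth writing out that a finite extension of a field with finite $p$-rank has the same $p$-rank (for \emph{any} finite extension, separable or not, the $p$-rank is preserved). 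Once that is in place the corollary is immediate.
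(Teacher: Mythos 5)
Your argument is correct and is essentially the paper's own proof: pass to $K(\zeta)$, apply Theorem \ref{theorem:bad_char} there, and multiply the degrees $[L:F]=[L:F(\zeta)][F(\zeta):F]\le p^{4d+4}[K(\zeta):K]$, with no corestriction needed. Your extra remark that $[\kappa':\kappa'^p]=[\kappa:\kappa^p]$ for the residue field $\kappa'$ of $K(\zeta)$ is a worthwhile precision (the paper simply asserts the residue field is $\kappa$), but it does not change the route.
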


\begin{proof} 
\label{cor:bad_char_n}
Let $K' = K(\zeta)$. Then $K'$ is a complete discretely
  valued field with residue field $\kappa$. Let $F$ be a finite
  extension of $K(t)$. 
Let $\alpha_1, \cdots , \alpha_m \in H^2(F, \mu_p^{\otimes 2})$. 
Since  $F'= F(\zeta)$ is also a function field  over  $K'$, by (\ref{theorem:bad_char}), there
  exists an extension $L$ of $F'$ of degree at most $p^{4d+4}$ such
  that $\alpha_i \otimes L  = 0$ for all $i$. Since $[L : F] = [L :
  F'][F' : F]$, the corollary follows. 
\end{proof}
 
The above corollary and (\ref{lemma:uniform_bound_d_d+1}) give the
following

\begin{cor} Let $K$ be a complete discretely  valued field  with residue field $\kappa$.
Suppose that char$(K) = 0$
and char$(\kappa) = p > 0$ and $[\kappa : \kappa^p] = p^d$. Let
$\zeta$ be a primitive   $p^{th}$ root of unity.  Then $[K(\zeta) : K]p^{4d+4}$ is an
$(n, p)$-uniform bound  for $K(t)$ for all $n \geq 2$. 
\end{cor}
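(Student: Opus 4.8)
The plan is to deduce this immediately from the two results just established: Corollary \ref{cor:bad_char_2}, which supplies a $(2,p)$-uniform bound, and Lemma \ref{lemma:uniform_bound_d_d+1}, which propagates a uniform bound upward through the cohomological degree. So there is essentially no new mathematical content here; the work is all in assembling the two ingredients correctly.

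First I would set $N = [K(\zeta):K]\,p^{4d+4}$ and invoke Corollary \ref{cor:bad_char_2} to record that $N$ is a $(2,p)$-uniform bound for $K(t)$. Here it matters that the hypotheses of that corollary — $\mathrm{char}(K)=0$, $\mathrm{char}(\kappa)=p>0$, and $[\kappa:\kappa^p]=p^d$ — are exactly the hypotheses we have been given, so the input is available verbatim.

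Next I would apply Lemma \ref{lemma:uniform_bound_d_d+1} with $K$ there taken to be our field $K(t)$, with $\ell = p$, and with $n = 2$. The lemma asserts that an $(n,\ell)$-uniform bound is also a $(d,\ell)$-uniform bound for every $d \geq n$; specializing $n=2$ shows that $N$ is an $(n,p)$-uniform bound for $K(t)$ for every $n \geq 2$, which is exactly the assertion. I should note that the lemma's proof uses Voevodsky's theorem that elements of $H^n(-,\mu_\ell^{\otimes n})$ are sums of symbols together with a splitting of one slot of a symbol, but since the lemma is already proved in the excerpt I may simply cite it.

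The only "obstacle" is bookkeeping: making sure the field to which the lemmas are applied is $K(t)$ (and its finite extensions $F$) rather than $K$ itself, and that the degree bound recorded in Corollary \ref{cor:bad_char_2} is the constant that is then carried through Lemma \ref{lemma:uniform_bound_d_d+1} unchanged — the propagation step does not enlarge the bound. With that observed, the proof is one line: combine the two cited results.

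\begin{proof}
By Corollary \ref{cor:bad_char_2}, the integer $N = [K(\zeta):K]\,p^{4d+4}$ is a $(2,p)$-uniform bound for $K(t)$. By Lemma \ref{lemma:uniform_bound_d_d+1} (applied to the field $K(t)$ with $\ell = p$ and $n = 2$), $N$ is then a $(d,p)$-uniform bound for $K(t)$ for every $d \geq 2$; that is, $N$ is an $(n,p)$-uniform bound for $K(t)$ for all $n \geq 2$.
\end{proof}
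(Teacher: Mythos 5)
Your proposal is correct and matches the paper exactly: the paper derives this corollary in one line by combining Corollary \ref{cor:bad_char_2} (the $(2,p)$-uniform bound) with Lemma \ref{lemma:uniform_bound_d_d+1} (propagation to all degrees $n \geq 2$), which is precisely your argument. Your bookkeeping remarks about applying the lemma to the field $K(t)$ with $\ell = p$ are accurate and the hypotheses check out, since $\mathrm{char}(K(t)) = 0 \neq p$.
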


\section{Uniform bound - good characteristic case  } 

Let $F$ be the  function field of a $p$-adic curve. In (\cite{saltman:jrms}), Saltman
proved that if  $\ell$ is  a prime not equal
to $p$ and $\alpha_1, \cdots , \alpha_m \in H^2(F, \mu_\ell)$, then there exists
an extension $L$ of $F$ such that $[L : F] \leq \ell^2$ and
$\alpha_{i_L} = 0 $ for all $i$, i.e., $F$  is  $(2,
\ell)$-uniformly bounded.

Let $K$ be a complete discretely  valued field with residue
  field $\kappa$.     
Let $\ell$ be a prime not equal to char$(\kappa)$.
In this section we show that $K(t)$   is  $(2,\ell)$-uniformly bounded
under some conditions on $\kappa$.

\begin{theorem}
\label{theorem:good_char_dvr} Let $K$ be a  complete discretely  valued field with residue
  field $\kappa$. Let $\ell$ be a prime not equal to char$(\kappa)$
  and $n \geq 1$.
If $N$ is an $(n, \ell)$-uniform bound for  $\kappa$, then
$\ell N$ is an  $(n, \ell)$-uniform bound for $K$.
\end{theorem}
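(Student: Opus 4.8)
The plan is the familiar two--step argument for complete discretely valued fields: first enlarge $K$ by a totally ramified extension of degree $\ell$ so that all the given classes become unramified, and then by an unramified extension of degree at most $N$ coming from the residue field. As a preliminary, I would note that $\mathrm{char}(K)\ne\ell$, since a field of characteristic $\ell$ has residue field of characteristic $\ell$, contrary to $\ell\ne\mathrm{char}(\kappa)$; so Galois cohomology with $\mu_\ell^{\otimes n}$--coefficients is available over $K$. I would also reduce to proving the statement for $K$ itself and a single finite family: it suffices to show that for any $\alpha_1,\dots,\alpha_m\in H^n(K,\mu_\ell^{\otimes n})$ there is a finite extension $E/K$ with $[E:K]\le\ell N$ and $\alpha_i\otimes_K E=0$ for all $i$, because an arbitrary finite extension $L$ of $K$ is again complete discretely valued with residue field $\kappa_L$ finite over $\kappa$, for which $N$ is still an $(n,\ell)$-uniform bound, and one then applies the reduced statement with $L$ in place of $K$.

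For the first step I would fix a uniformizer $\pi$ of $K$ and set $K_1=K(\sqrt[\ell]{\pi})$. Since $X^\ell-\pi$ is Eisenstein, $K_1/K$ is totally ramified of degree $\ell$, with residue field $\kappa$ and ramification index $\ell$. Hence for each $i$ the residue satisfies $\partial(\alpha_i\otimes_K K_1)=\ell\,\partial(\alpha_i)=0$ in $H^{n-1}(\kappa,\mu_\ell^{\otimes(n-1)})$, so $\alpha_i\otimes_K K_1$ lies in the kernel $H^n_{\et}(R_1,\mu_\ell^{\otimes n})$ of the residue map, where $R_1$ is the valuation ring of $K_1$. Because $R_1$ is complete, $H^n_{\et}(R_1,\mu_\ell^{\otimes n})\cong H^n(\kappa,\mu_\ell^{\otimes n})$, so I can choose $\ol\alpha_i\in H^n(\kappa,\mu_\ell^{\otimes n})$ with $\alpha_i\otimes_K K_1=\iota_{K_1}(\ol\alpha_i)$, where $\iota_{K_1}\colon H^n(\kappa,\mu_\ell^{\otimes n})\to H^n(K_1,\mu_\ell^{\otimes n})$ is the natural map.

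For the second step I would invoke the hypothesis: since $N$ is an $(n,\ell)$-uniform bound for $\kappa$, there is a finite extension $\kappa'/\kappa$ with $[\kappa':\kappa]\le N$ killing all $\ol\alpha_i$, and arguing as in Lemma~\ref{lemma:sep_ext} (applied to the finite family $\ol\alpha_1,\dots,\ol\alpha_m$, using $\ell\ne\mathrm{char}(\kappa)$) one may take $\kappa'/\kappa$ separable. As $K_1$ is complete, hence Henselian, $\kappa'/\kappa$ lifts to an unramified extension $K_2/K_1$ with residue field $\kappa'$ and $[K_2:K_1]=[\kappa':\kappa]\le N$. The maps $\iota$ being compatible with unramified base change, one gets $\alpha_i\otimes_K K_2=\iota_{K_2}(\ol\alpha_i\otimes_\kappa\kappa')=0$ for all $i$; since $[K_2:K]=[K_2:K_1]\,[K_1:K]\le N\ell$, taking $E=K_2$ finishes the reduced statement, and with it the theorem.

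The argument is almost entirely a matter of multiplying degrees; the substantive inputs are the two standard facts about complete discretely valued fields used above --- that the residue homomorphism is multiplied by the ramification index under a finite extension (so that a totally ramified extension of degree $\ell$ annihilates all $\ell$-torsion residues), and that the unramified classes over a complete discrete valuation ring are precisely the images of the classes over the residue field, compatibly with unramified extensions. These are where whatever care is needed lies, but I would not expect a genuine obstacle here.
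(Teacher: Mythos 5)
Your proposal is correct and follows essentially the same route as the paper: adjoin $\sqrt[\ell]{\pi}$ to make the classes unramified, identify them with classes over the residue field via completeness, kill those using the $(n,\ell)$-uniform bound for $\kappa$, and lift the resulting (separable) residue extension to an unramified extension, giving total degree at most $\ell N$. Your write-up is in fact slightly more careful than the paper's on two points the paper leaves implicit --- the reduction from an arbitrary finite extension $L$ of $K$ to $K$ itself, and the use of Lemma~\ref{lemma:sep_ext} to ensure the residue extension is separable before lifting it unramifiedly.
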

 
\begin{proof} Let $L$ be a finite extension of $K$. Then $L$ is a
  complete   discretely valued  field with residue $\kappa'$ a finite
  extension of $\kappa$. Let $R$ be the valuation ring of $R$ and  $\pi \in R$ be
  a parameter.   
Let $\alpha_1, \cdots, \alpha_m \in H^n(L,
  \mu_\ell^{\otimes n})$. Let $S$ be the integral closure of $R$ in
  $L(\sqrt[\ell]{\pi})$.    Then  $S$ is also a complete discrete
  valuation ring with residue field $\kappa'$.  Since  $S/R$ is
  ramified,  $\alpha_i \otimes
  L(\sqrt[\ell]{\pi})$ is unramified at $S$  for each $i$. 
Hence there  exists $\beta_i \in H^n_{\et}(S, \mu_\ell^{\otimes n})$ such that $\beta_i
  \otimes_SK(\sqrt[\ell]{\pi}) = \alpha_i$. Since $N$ is an $(n,
  \ell)$-uniform bound for $\kappa$, there exists an extension $L_0$
  of $\kappa$ of degree at most $N$ such that $\beta_i \otimes_\kappa L_0 =
  0$ for all $i$.  Let $L$ be the extension of $K$ of degree equal to
  $[L_0 : \kappa]$ with  residue field $L_0$.  Let $T$ be the
  integral closure of $R$ in $L(\sqrt[\ell]{\pi})$. Then $T$ is a
  complete discrete valued ring  with residue field $L_0$ and
  $S \subset T$. Since $\beta_i \otimes_\kappa L_0 = 0$, $\beta_i
  \otimes_S T = 0$ for all $i$. In particular $\alpha_i \otimes_K
  L(\sqrt[\ell]{\pi}) = 0$ for all $i$. Since the degree of $L$ over
  $K$ is  equal to the degree of $L_0$ over $\kappa$ and $[L_0 :
  \kappa] \leq N$, we have $[L(\sqrt[\ell]{\pi}) : K] \leq \ell N$.
\end{proof}

\begin{lemma} 
\label{lemma:good_char_dim2}
Let $A$ be a  regular local ring with residue
field $\kappa$ and maximal ideal $m = (\pi,\delta)$. Let $F$ be the field of
fractions of $A$, $\ell$ a prime not equal to char$(\kappa)$. 
Let $B$ (resp. $B'$) be the integral closure of $A$ in
  $F(\sqrt[\ell]{\pi}, \sqrt[\ell]{\delta})$ (resp. $F(\sqrt[\ell]{\pi})$).
 Let $\alpha \in H^2(F, \mu_\ell^{\otimes 2})$. If
$\alpha$ is unramified on $A$ except possibly at $(\pi)$ and
$(\delta)$ (resp. except possibly at $(\pi)$), then $\alpha \otimes_F F(\sqrt[\ell]{\pi},
\sqrt[\ell]{\delta})$ is unramified on $B$ (resp. $\alpha \otimes_F
F(\sqrt[\ell]{\pi})$ is unramified on $B'$).  
\end{lemma}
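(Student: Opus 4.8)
The plan is to reduce the statement to a purely local computation about residues of $\alpha \tensor_F F(\sqrt[\ell]{\pi},\sqrt[\ell]{\delta})$ at the height-one primes of $B$ (resp. $B'$), and to show each such residue vanishes. The ring $B$ (resp. $B'$) is again a regular local ring — this is the standard fact that adjoining $\ell$-th roots of a system of part of a regular system of parameters keeps the ring regular, since $\ell$ is a unit — so to check that a class in $H^2$ of its fraction field is unramified on $B$ it suffices, by purity (for instance by Theorem~\ref{auslander-goldman} applied to the two-dimensional regular ring $B$, or more simply by checking residues at height-one primes), to verify that $\partial_Q(\alpha \tensor_F \Frac(B)) = 0$ for every height-one prime $Q$ of $B$. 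So the first step is to enumerate the height-one primes $Q$ of $B$ and sort them according to the prime $P$ of $A$ lying below $Q$.

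For a height-one prime $P$ of $A$ with $P \neq (\pi),(\delta)$, the hypothesis gives $\partial_P(\alpha) = 0$, and since the extension $\Frac(B)/F$ is unramified over $A_P$ (the elements $\pi,\delta$ are units in $A_P$, so we are adjoining $\ell$-th roots of units), functoriality of residues — $\partial_Q(\alpha_{\Frac(B)})$ is, up to the ramification index, the restriction of $\partial_P(\alpha)$ to $\kappa(Q)$ — forces $\partial_Q(\alpha_{\Frac(B)}) = 0$. The remaining cases are the primes $Q$ lying over $P = (\pi)$ and over $P = (\delta)$. These are exactly the primes where the extension is (totally) ramified: over $(\pi)$, the ring $B$ has a single prime $Q_\pi$ with ramification index $\ell$ and uniformizer $\sqrt[\ell]{\pi}$; similarly over $(\delta)$ there is a single $Q_\delta$ with uniformizer $\sqrt[\ell]{\delta}$. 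The key point is that adjoining an $\ell$-th root of the uniformizer kills the residue: if $\beta \in H^2(\Frac A_P, \mu_\ell^{\otimes 2})$, then $\beta \tensor \Frac A_P(\sqrt[\ell]{\pi})$ is unramified, because its residue lies in $H^1(\kappa(Q_\pi), \mu_\ell)$ and equals $\ell$ times the residue of $\beta$, hence is zero — or, cleaner, because the composite of the residue with restriction along a totally ramified degree-$\ell$ extension whose uniformizer is an $\ell$-th root is the zero map. This handles $Q_\pi$, and the same argument at $(\delta)$ handles $Q_\delta$. For the parenthetical assertion with $B'$, the only difference is that we adjoin only $\sqrt[\ell]{\pi}$: now $(\delta)$ stays unramified in $B'$ (we're not touching $\delta$), but by hypothesis the residue of $\alpha$ at $(\delta)$ already vanishes, so the functoriality argument of the generic-prime case applies verbatim there, while $(\pi)$ is handled by the ramified-case argument as before.

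I expect the main obstacle to be the bookkeeping around \emph{which} primes of $B$ lie over $(\pi)$ and $(\delta)$ and verifying that in each case the extension $\Frac(B)/F$ is of the claimed local type (totally ramified of degree $\ell$ with the expected uniformizer, versus unramified). Over $(\pi)$ one must observe that $\delta$ is a unit in $A_{(\pi)}$, so $F(\sqrt[\ell]{\delta})/F$ is unramified at $(\pi)$ while $F(\sqrt[\ell]{\pi})/F$ is totally ramified there, and the residue field does not acquire new $\ell$-th roots of the relevant units in a way that matters — the cleanest formulation is simply that $\partial_{Q_\pi}$ composed with $\mathrm{res}_{F \to \Frac(B)}$ factors through multiplication by the ramification index $\ell$ on $H^1$, hence is zero on $\ell$-torsion. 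One should also note that $B$ (resp. $B'$) is local — or at worst semilocal — which is needed so that "unramified on $B$" is the same as "unramified at every height-one prime"; this uses that $A$ is local with separably closed... no, rather it uses that $\pi,\delta$ generate the maximal ideal, so the extension is ramified only along $(\pi)$ and $(\delta)$ and $B$ has the same residue field $\kappa$ (since $\ell \neq \mathrm{char}\,\kappa$ and the reduction of $\sqrt[\ell]{\pi}$ generates a purely ramified extension). None of these steps involves hard input; the proof is essentially a diagram chase with residue maps, invoking only the elementary behavior of $\partial$ under ramified and unramified base change together with Theorem~\ref{auslander-goldman} (or bare purity) to reassemble the local vanishing into the global statement.
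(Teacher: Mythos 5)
Your proposal is correct and follows essentially the same route as the paper: check residues at each height-one prime $P$ of $B$ according to whether $P\cap A$ equals $(\pi)$, $(\delta)$, or neither, use the hypothesis in the unramified case and the fact that the ramification index $\ell$ annihilates $\ell$-torsion residues in the ramified case, then conclude by purity for the two-dimensional regular local ring $B$ via Theorem~\ref{auslander-goldman}. The paper states the ramified case more tersely (``$Q$ is ramified in $B$ and hence $\alpha$ is unramified at $P$''), but the mechanism is exactly the one you spell out.
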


\begin{proof} By (\cite[3.3]{ps:period_index}),  $B$ is a    regular
  local ring of dimension 2 with residue field $\kappa$. 
Let $P$ be a height one prime ideal of $B$ and $Q = P \cap A$. Then
$Q$ is a height one  prime ideal of $A$. If $Q \neq (\pi)$ and
$(\delta)$, then $\alpha$ is unramified at $Q$ and hence 
$\alpha \otimes F(\sqrt[\ell]{\pi}, \sqrt[\ell]{\delta})$ is
unramified at $P$. Suppose $Q = (\pi)$ or $(\delta)$.  Then $Q$ is ramified in $B$
and hence $\alpha \otimes F(\sqrt[\ell]{\pi}, \sqrt[\ell]{\delta})$ is
unramified at $P$. Since $B$ is a regular local ring of dimension 2
and $\alpha \otimes F(\sqrt[\ell]{\pi}, \sqrt[\ell]{\delta})$ is
unramified at every height one prime ideal of $B$, 
$\alpha \otimes F(\sqrt[\ell]{\pi}, \sqrt[\ell]{\delta})$ is
unramified on $B$ (cf. \ref{auslander-goldman}). 
The other case follows similarly. \end{proof}

\begin{theorem}
\label{theorem:good_char_killing_ramification} 
Let $R$ be a complete discrete valuation ring with
  field of fractions $K$ and    residue field $\kappa$. Let $F$ be the
  function field of a curve over $K$.  Let $\ell$ be a prime not equal
  to char$(\kappa)$ and $\alpha_1, \cdots ,
  \alpha_r \in H^n(K, \mu_\ell^{\otimes m})$. Then there exist $f, g,  h \in F^*$
  such that  each $\alpha_i \otimes F(\sqrt[\ell]{f}, \sqrt[\ell]{g},
  \sqrt[\ell]{h})$ is unramified at all codimension one points of any
  regular proper model  of $F(\sqrt[\ell]{f}, \sqrt[\ell]{g},
  \sqrt[\ell]{h})$. 
\end{theorem}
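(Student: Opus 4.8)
The plan is to argue along the lines of the bad-characteristic case (Theorem~\ref{theorem:bad_char}), but now replacing the explicit splitting results (\ref{theorem:ps1}), (\ref{prop:ps2}) by the simpler statement that adjoining $\ell$-th roots of a parameter kills ramification, which is legitimate since $\ell \neq \mathrm{char}(\kappa)$. First I would choose a regular proper model $\XX$ of $F$ over $R$ such that the union of the special fiber $X$ and the supports of $\mathrm{ram}(\alpha_i)$ for all $i$ is contained in $C \cup E$, where $C$ and $E$ are regular curves on $\XX$ meeting only in normal crossings; this is standard by embedded resolution of singularities in dimension two. The three elements $f, g, h \in F^*$ will be built so that, roughly, $f$ takes care of all vertical (special-fiber) ramification via a parameter $t$ of $R$, while $g$ and $h$ cut out $C$ and $E$ locally near the finitely many problematic closed points.

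Concretely, at each generic point $\eta$ of a component $X_\eta$ of the special fiber, $\alpha_i$ is ramified only along the vertical divisor, so adjoining the $\ell$-th root of a uniformizer at $\eta$ makes it unramified there; by a Chinese-remainder-type argument I would produce a single $f \in F^*$ (for instance $f = t$ up to adjusting by a function that is a unit at each $\eta$) with $\nu_\eta(f) = 1$ for every such $\eta$. Over the generic fiber the residue characteristic condition forces $\alpha_i$ unramified on a nonempty open $U_\eta \subset X_\eta$ after this base change, by the usual good-reduction/specialization argument (cf.\ \cite[5.8]{hhk:lgp_torsors}). Let $\PP$ be the finite set of closed points of $\XX$ lying outside every $U_\eta$; at each $P \in \PP$ the normal-crossings hypothesis gives local generators $\pi_P, \delta_P$ of $m_P$ with $\alpha_i$ unramified on $A_P$ away from $(\pi_P)$ and $(\delta_P)$. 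Applying Lemma~\ref{lemma:choice_of_f_g} to $C$, $E$ and $\PP$ yields $g, h \in F^*$ with $m_P = (g, h)$ at each $P \in \PP$, with $g$ defining $C$ and $h$ defining $E$ wherever those curves pass through $\PP$; in particular $\alpha_i$ is unramified on $A_P$ except possibly at $(g)$ and $(h)$.

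Set $L = F(\sqrt[\ell]{f}, \sqrt[\ell]{g}, \sqrt[\ell]{h})$. To verify that each $\alpha_i \otimes L$ is everywhere unramified, I would pass to a regular proper model $\YY$ of $L$ with special fiber $Y$ and check unramifiedness at every codimension-one point $y$. If $y$ lies over a generic point of a component of $X$, the choice of $f$ (together with the open sets $U_\eta$, which $y$ either maps into or whose complement lies in $\PP$) handles it; if $y$ lies over a closed point $P$ of $X$ that belongs to some $U_\eta$, then $F_{U_\eta} \subset F_P \subset L_y$ and again $f$ suffices; and if $y$ lies over $P \in \PP$, then since $m_P = (g,h)$ and $\alpha_i \otimes F_P$ is unramified except at $(g),(h)$, adjoining $\sqrt[\ell]{g}, \sqrt[\ell]{h}$ kills that ramification by Lemma~\ref{lemma:good_char_dim2} applied to the complete local ring at $P$ (whose regularity in the extension is \cite[3.3]{ps:period_index}). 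The main obstacle I anticipate is the bookkeeping at the closed points where $C$ and $E$ cross and where the vertical fiber also passes: one must be sure the single triple $(f,g,h)$ simultaneously trivializes the horizontal ramification (via $g,h$) and the vertical ramification (via $f$) without the three roots interfering, and that the semi-local choices from Lemma~\ref{lemma:choice_of_f_g} and the Chinese remainder theorem can genuinely be made compatibly with the already-fixed $f$. Once the local pictures at the finitely many points of $\PP$ and at the components of $X$ are reconciled, the theorem follows, with $L/F$ of degree at most $\ell^3$.
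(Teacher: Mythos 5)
Your overall strategy --- pass to a normal-crossings model, adjoin $\ell$-th roots of three functions, and check unramifiedness pointwise using Lemma~\ref{lemma:good_char_dim2} at closed points --- has the right shape, and the degree bound $\ell^3$ is correct. But the construction of $f,g,h$ you propose leaves a genuine gap: the horizontal part of the ramification locus is never dealt with. Your $f$ is only required to satisfy $\nu_\eta(f)=1$ at the generic points $\eta$ of the components of the special fiber, whereas $\mathrm{ram}(\alpha_i)$ in general also contains horizontal curves; their generic points are codimension one points of $\XX$, hence give codimension one points of $\YY$, and the residues of $\alpha_i$ there must be killed. Nothing in your construction forces any of $f$, $g$, $h$ to have valuation prime to $\ell$ along such a curve: $g$ and $h$ are only controlled semi-locally at the finite set $\PP$ via Lemma~\ref{lemma:choice_of_f_g}. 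This is precisely what the paper's global choice $\mathrm{div}_\XX(f)=C+E+F$ (with $F$ an auxiliary divisor in general position) accomplishes: $f$ is then a uniformizer along \emph{every} component of $C\cup E$, vertical and horizontal alike, so each such residue is multiplied by $\ell$ after adjoining $\sqrt[\ell]{f}$ and dies.

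The second problem is that the $U_\eta$/$\PP$ patching framework and the appeal to \cite[5.8]{hhk:lgp_torsors} are transplanted from Theorem~\ref{theorem:bad_char}, where they propagate a \emph{splitting} of $\alpha_i$ over $F_\eta(\cdots)$ to $F_{U_\eta}(\cdots)$. Here there is no splitting to propagate: with no hypothesis on $\kappa$ (and the present theorem imposes none), $\alpha_i\otimes F_\eta(\sqrt[\ell]{f})$ is merely unramified at $\eta$, and its class over $\kappa(\eta)$ may be nonzero, so the cited lemma gives you nothing and the claim that ``$f$ suffices'' for codimension one points of $\YY$ centered at closed points $P\in U_\eta$ is unsupported. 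Moreover the problematic closed points are not a finite set $\PP$ but \emph{all} closed points of $C\cup E$, since any of them can be the center of a divisorial valuation of $L$ produced by blowing up; so the local parameters fed into Lemma~\ref{lemma:good_char_dim2} must be available at every closed point of $C\cup E$. The paper's proof uses no patching at all: it fixes $\mathrm{div}_\XX(f)=C+E+F$, $\mathrm{div}_\XX(g)=C+G$, $\mathrm{div}_\XX(h)=E+H$ with $F,G,H$ avoiding the relevant pairwise intersections, so that at every closed point of $C\cup E$ one of $f,g,h$ (or the pair $g,h$ at points of $C\cap E$) supplies the required regular parameters. The ``bookkeeping obstacle'' you flag at the end is real, and resolving it by these global divisor choices is essentially the entire content of the proof.
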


\begin{proof} Let $\XX$ be a regular proper model of $F$ over $R$  such that 
  the union of  the support of ramification locus of $\alpha_i$, for $1
  \leq i \leq r$,  is  contained in the union of regular curves $C$ and
  $E$ with $C \cup E$ having only normal crossings.  Let $f \in F^*$
  be such that
$${\rm div}_\XX(f) = C + E + F$$
for some divisor $F$ on $\XX$ which does not contain any irreducible
component of $C \cup E$ and  does not pass through any point of $C
\cap E$.  Let $g \in F^*$ be such that 
$${\rm div}_\XX(g) = C  + G$$
for some divisor $G$ on $\XX$ which does not contain any irreducible
component of $C \cup E \cup F$ and  does not pass through any point of $C
\cap E$, $C \cap F$ and $E \cap F$.  Let $h \in F^*$ be such that 
$${\rm div}_\XX(h) =  E + H$$
for some divisor $H$ on $\XX$ which does not contain any irreducible
component of $C \cup E \cup F \cup G$ and  does not pass through any point of $C
\cap E$, $C\cap F$, $C \cap G$, $E \cap F$, $E \cap G$ and $F \cap
G$. 

Let $L = F(\sqrt[\ell]{f}, \sqrt[\ell]{g}, \sqrt[\ell]{h})$ and $\YY$
be  a regular proper model of $L$ over $R$. We claim that $\alpha_i
\otimes_F L$ is unramified on $\YY$. Let $y \in \YY$ be a codimension
one point of $\YY$. Then $y$ lies over a point $x$ of $\XX$. 
If $x$ is not on $C$ or
$E$, then each $\alpha_i$ is unramified at $x$ and hence $\alpha
\otimes_F L$ is unramified at $y$. Assume that $x \in C\cup E$.

Suppose that $x$ is a codimension one point of $\XX$.  Since  $x$ is on $C$ or
$E$, by the choice of  $f$, $f$  is a parameter at $x$ and hence $L/K$ is unramified at
$x$.  In particular $\alpha_i \otimes_F L $ is unramified at $y$.

Suppose that $x$ is a closed point of $\XX$. Suppose $x \in C$ and $x
\not\in E$.  Let $A_x$ be the local ring of $\XX$ at $x$ and $S_y$ be
the local ring of $\YY$ at $y$.   
Suppose $x \not\in F$. Then the maximal ideal $m_x$ at 
$x$ is $(f, \delta_x)$ for some $\delta_x \in m_x$ and each $\alpha_i$
is unramified on the local ring at $x$ except possibly at $(f)$.  Thus,
by (\ref{lemma:good_char_dim2}),  each  $\alpha_i \otimes_F
F(\sqrt[\ell]{f})$ is unramified at the integral closure of $A$ in
$F[\sqrt[\ell]{f}]$. Since the integral closure of $A$ in
  $F[\sqrt[\ell]{f}]$ is contained in $S_y$,  each $\alpha_i\otimes_F \otimes
  L$ is unramified at $S_y$. Suppose $x \in F$. If $x \not\in G$, the
  as above each $\alpha_i \otimes_F L$ is unramified at $S_y$. If $x
  \in F\cap G$, then by the choice $h$, $x \not\in H$ and hence as
  above,  each $\alpha_i \otimes_F L$ is unramified at $S_y$.
  Similarly if $x \in E$ and $x \not\in C$, then each $\alpha_i
  \otimes_F L$ is unramified at $S_y$.

Suppose $x \in C \cap E$.   Then $x \not\in G$ and $x \not\in H$. In
particular, $m_x = (g, h)$ and  each $\alpha_i$ is unramified on $A_x$
except possibly at $(g)$ and $(h)$. As above each $\alpha_i \otimes_F
L$ is unramified at $S_y$. 
\end{proof}

\begin{lemma} 
\label{lemma:crt}
Let $A$ be a  semi-local regular domain   with field of fractions
  $F$.    For each maximal ideal $m$
  of $A$, let $\mathfrak{s}(m)$ be a separable  finite extension of  the residue field
  $\kappa(m)$ at $m$ of degree $N_m$.  Let $N$ be a  common
  multiple of $N_m$,  $m$ varying  over all maximal ideals  of $A$. Then there exists  
  an extension  $E$ of $F$ of degree at most $N$ such that
 for each  maximal ideal $m$ of $A$ and for each maximal ideal $m'$ of
 the integral   closure $B_m$ of  $A_m$ in $E$,  $B_m/m'$  contains a
 field   isomorphic to $\mathfrak{s}(m)$.   
\end{lemma}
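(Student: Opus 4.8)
The plan is to reduce each prescribed residue extension to a single monic irreducible polynomial, assemble these into one global polynomial over $A$ by the Chinese remainder theorem, and then take \emph{any} irreducible factor as the defining polynomial of $E$. The crucial point — which is exactly what forces the residue condition to hold at every maximal ideal $m'$ rather than merely at some one of them — is that modulo each $m$ the global polynomial is arranged to be a \emph{pure power} of one irreducible, so that every factor reduces to a power of that same irreducible and hence has the correct residue field.

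Concretely, I would first use that each $\mathfrak{s}(m)/\kappa(m)$ is finite separable, hence simple, and write $\mathfrak{s}(m) \cong \kappa(m)[X]/(\bar p_m)$ for a monic separable irreducible $\bar p_m \in \kappa(m)[X]$ of degree $N_m$. Since $N$ is a common multiple of the $N_m$, the polynomial $\bar q_m := \bar p_m^{\,N/N_m} \in \kappa(m)[X]$ is monic of degree $N$ and its only monic irreducible factor is $\bar p_m$. Because $A$ is semilocal its finitely many maximal ideals are pairwise comaximal and $A \twoheadrightarrow \prod_m \kappa(m)$; applying this coefficient by coefficient produces a monic $f \in A[X]$ of degree $N$ with $f \equiv \bar q_m \pmod{m}$ for every maximal ideal $m$.

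Next I would factor $f$ over $F$. As $A$ is regular it is normal (indeed a UFD), so every monic factor of the monic polynomial $f$ again has coefficients in $A$; I fix one monic irreducible factor $f_1 \in A[X]$ and set $E := F[X]/(f_1)$, a field extension of $F$ with $[E:F] = \deg f_1 \le N$. For each maximal ideal $m$, localizing at $m$ and reducing, $\bar f_{1}$ divides $\bar f = \bar q_m = \bar p_m^{\,N/N_m}$ in $\kappa(m)[X]$, so $\bar f_{1}$ is itself a (positive) power of $\bar p_m$. Consequently the maximal ideals of the finite $A_m$-algebra $A_m[X]/(f_1)$ all lie over $m$ and correspond to the single irreducible $\bar p_m$, each with residue field $\kappa(m)[X]/(\bar p_m) \cong \mathfrak{s}(m)$. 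Since $A_m[X]/(f_1) \subseteq B_m$ is integral with $A_m$ local, every maximal ideal $m'$ of $B_m$ lies over $m$ and contracts to such a maximal ideal; the induced inclusion of residue fields then yields $\mathfrak{s}(m) \hookrightarrow B_m/m'$ for \emph{every} $m'$, as required.

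The step I expect to be delicate is precisely the universal quantifier over $m'$: any construction in which $\bar f$ picks up an extra irreducible factor — for instance padding $\bar p_m$ up to degree $N$ with unrelated linear factors — would create maximal ideals of $B_m$ whose residue field is only $\kappa(m)$, breaking the conclusion. Taking $\bar f \equiv \bar p_m^{\,N/N_m}$ a pure power removes this, and pleasantly it also dispenses with any need to make $f$ itself irreducible over $F$: every irreducible factor of $f$ automatically reduces to a power of $\bar p_m$ modulo each $m$ simultaneously, so any one factor may serve as the defining polynomial of $E$. The only remaining verifications are the coefficientwise surjectivity of $A \to \prod_m \kappa(m)$ and the standard correspondence between maximal ideals of $B_m$ over $m$ and the monic irreducible factors of $\bar f_{1}$, both immediate from normality of $A$ and integrality of $B_m$ over the local ring $A_m$.
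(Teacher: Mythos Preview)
Your proof is correct and follows essentially the same route as the paper's: write each $\mathfrak{s}(m)$ as $\kappa(m)[X]/(\bar p_m)$, use the Chinese remainder theorem to lift the pure powers $\bar p_m^{\,N/N_m}$ to a single monic $f\in A[X]$ of degree $N$, take any monic irreducible factor $f_1$ of $f$ (which lies in $A[X]$ by normality), and then argue via integrality of $B_m$ over $A_m[X]/(f_1)$ that every residue field $B_m/m'$ contains $\mathfrak{s}(m)$. Your emphasis on the pure-power reduction forcing a \emph{unique} maximal ideal in $A_m[X]/(f_1)$ over $m$ is exactly the point the paper uses (implicitly) to get the universal quantifier over $m'$; if anything, you make this step more explicit than the paper does.
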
 

\begin{proof}    Since $\mathfrak{s}(m)$ is a finite separable extension
  of  $\kappa(m)$, there exists $\theta_m
   \in \mathfrak{s}(m) $ such that $\mathfrak{s}(m) = \kappa(m)(\theta_m)$.
 Let $f_m(X) \in \kappa(m)[X]$ be the minimal polynomial of
   $\theta_m$ over $\kappa(m)$. Then the degree of $f_m(X)$ is $N_m$.  
 Let $f(X) \in A[X]$ be
  a monic polynomial of degree $N$ such that $f(X) = f_m(X)^{N/deg(f_m)}$ modulo
  $m$ for each maximal ideal $m$ of $A$.   Let $g(X)$ be any monic
  irreducible factor of $f(X)$ over $A$. Let $E = F[X]/(g(X))$. We claim that
  $E$ has the required property. 

Let $m$ be a maximal ideal of $A$.  By the choice of $f(X)$ and
$g(X)$, we have $g(X) = f_m(X)^{r_m}$ modulo $m$ for some $r_m \geq
1$.  Let $B_m$ be the integral closure of $A_m$ in $E$. Since $g(X)$
is monic, $A_m[X]/(g(X))$ is isomorphic to a subring of $B_m$.  Let
$g_m(X) \in A_m[X]$ be a monic polynomial with $g_m(X) = f_m(X)$
modulo $m$. Since $f(X) = f_m(X)^{r_m} = g_m(X)^{r_m}$ modulo $m$,
the ideal $\tilde{m}$ of $ A_m[X]/(g(X))$ generated by $m$ and $g_m(X)$ is a maximal 
ideal  with $(A_m[X]/(g(X)))/\tilde{m} \simeq \mathfrak{s}(m)$. 
Since $B_m$ is integral over a subring isomorphic to
$A_m[X]/(g(X))$, for every maximal ideal $m'$ of $B_m$, $B_m/m'$
contains a subfield isomorphic to $(A_m[X]/(g(X)))/\tilde{m} \simeq \mathfrak{s}(m)$.
\end{proof} 

\begin{cor} 
\label{cor:crt} 
Let $A$ be a  semi-local regular domain   with field of fractions
  $F$.  Let $\ell$ be a prime. Suppose that $\ell$ is a unit in $A$.
  Let $\beta \in H^2_\et(A, \mu_\ell)$. Suppose that for every maximal ideal 
  $m$ of $A$, there  exists a finite  separable extension
  $\mathfrak{s}(m)$ of $\kappa(m)$ of  
  degree $N_m$ such that $\beta \otimes_A \mathfrak{s}(m) = 0$. 
  Let $N$ be a common multiple of $N_m$,  where $m$ varies over
  maximal ideals of $A$.  Let  $E$ be 
  the field constructed in  (\ref{lemma:crt}). Then for any maximal 
  ideal $m$ of $A$,  $\beta \otimes_A (E \otimes_F F_m) = 0$. 
 \end{cor}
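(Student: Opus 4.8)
The plan is to fix a maximal ideal $m$ of $A$, reduce the assertion to a statement about the complete local ring $\hat{A}_m$, and then use the reduction (rigidity) isomorphism for the \'etale cohomology of a complete local ring in which $\ell$ is invertible --- the same device already used in the proof of (\ref{theorem:good_char_dvr}). Write $\kappa = \kappa(m)$ and let $R = \hat{A}_m$, with maximal ideal $\mathfrak{m}_R$, so that $R$ is a complete regular local ring with residue field $\kappa$ and $F_m = \Frac(R)$. Recall from (\ref{lemma:crt}) that $E = F[X]/(g(X))$ for a monic $g \in A[X]$, and that its proof establishes the congruence $g \equiv f_m(X)^{r_m} \pmod m$ for some $r_m \geq 1$, where $f_m \in \kappa[X]$ is the minimal polynomial of a generator of $\mathfrak{s}(m)$ over $\kappa$.

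First I would introduce the $R$-algebra $S := R[X]/(g(X))$. Because $g$ is monic, $S$ is a finite free $R$-module, hence finite over the complete local ring $R$; because $f_m$ is irreducible, $S/\mathfrak{m}_R S \isom \kappa[X]/(f_m^{r_m})$ is a local ring; therefore $S$ is a complete local ring, with residue field $\kappa[X]/(f_m) \isom \mathfrak{s}(m)$. Moreover $S \tensor_R F_m = F_m[X]/(g(X)) = E \tensor_F F_m$. Next I would invoke the reduction isomorphism $H^2_\et(S, \mu_\ell) \xrightarrow{\sim} H^2_\et(\mathfrak{s}(m), \mu_\ell)$: under it the image of $\beta$ corresponds to the image of $\beta$ along $A \to \kappa \to \mathfrak{s}(m)$, which is $\beta \tensor_A \mathfrak{s}(m) = 0$ by hypothesis. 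Hence $\beta \tensor_A S = 0$, and base changing along $S \to S \tensor_R F_m = E \tensor_F F_m$ yields $\beta \tensor_A (E \tensor_F F_m) = 0$, as wanted.

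I do not expect a serious obstacle: the genuine content is already contained in (\ref{lemma:crt}) and in the cited reduction isomorphism. The step that most requires care is the identification of $S$ as a \emph{local} ring with residue field precisely $\mathfrak{s}(m)$, which is exactly where the congruence $g \equiv f_m^{r_m} \bmod m$ from the proof of (\ref{lemma:crt}) enters, together with the standard fact that a finite algebra over a complete local ring is a finite product of local rings. An essentially equivalent route, phrased directly in the language of (\ref{lemma:crt}), is to set $\hat{B} = B \tensor_{A_m} \hat{A}_m$, where $B$ is the integral closure of $A_m$ in $E$: then $E \tensor_F F_m = \hat{B} \tensor_{\hat{A}_m} F_m$, every maximal ideal of $\hat{B}$ has residue field containing $\mathfrak{s}(m)$ by (\ref{lemma:crt}), and applying the reduction isomorphism to each complete local factor of $\hat{B}$ gives $\beta \tensor_A \hat{B} = 0$ and hence the result; this version additionally uses that $B$ is finite over $A_m$, which holds in all the cases needed in the sequel (for instance when $A$ is excellent).
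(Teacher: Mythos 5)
Your argument is correct, and it rests on the same two pillars as the paper's own proof: the congruence $g(X)\equiv f_m(X)^{r_m}\bmod m$ established inside the proof of (\ref{lemma:crt}), and the rigidity isomorphism $H^2_{\et}(S,\mu_\ell)\isom H^2_{\et}(\kappa_S,\mu_\ell)$ for a complete (henselian) local ring $S$ in which $\ell$ is invertible. The one real difference is the intermediate ring you complete. The paper takes the integral closure $B$ of $A$ in $E$, forms $B\tensor_A\hat{A}_m$, and notes that modulo its radical this is a product of fields each containing a copy of $\mathfrak{s}(m)$, so that $\beta$ dies on each complete local factor and hence on $E\tensor_F F_m$ after inverting. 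You instead work with $S=\hat{A}_m[X]/(g(X))$, which is visibly finite free over $\hat{A}_m$, is a single complete local ring with residue field exactly $\mathfrak{s}(m)$ by the congruence, and still satisfies $S\tensor_{\hat{A}_m}F_m=E\tensor_F F_m$. This buys a small but genuine simplification: you never need $B$ to be module-finite over $A_m$ (the paper uses this silently when asserting that $B\tensor_A\hat{A}_m$ is complete; it holds here because the rings in play are excellent, but your route makes the issue moot), and you deal with one local ring rather than a product. Your alternative formulation at the end is precisely the paper's argument, so you have in effect given both proofs.
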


\begin{proof}   Let $B$ be the integral closure of $A$ in $E$. 
Let  $m$ be a maximal  ideal   of $A$. 
and  $\hat{A}_m$  the completion of $A$ at $m$. 
Then,  $B \otimes_A \hat{A}_m$ is complete and by the choice of $E$, 
$B \otimes_A \hat{A}_m$ modulo its radical is   isomorphic to a
product of fields with  
each factor containing a field  isomorphic to $\mathfrak{s}(m)$.
Since $\beta \otimes \mathfrak{s}(m) = 0$, it follows that $\beta
\otimes B \otimes  \hat{A}_m = 0$. 
Since $E$ is the field of fractions of $B$,  $\beta \otimes_A (E
\otimes_F F_m) = 0$. 
 \end{proof}

\begin{theorem} 
\label{theorem:good_char}
Let $K$ be complete discretely  valued field with residue field
$\kappa$. 
Let $\ell$ be a prime not equal to char$(\kappa)$. 
Suppose that     $N_1$ is
a $(2, \ell)$-uniform bound for   $\kappa(t)$
and $N_2$ is a $(2, \ell)$-uniform bound for   $\kappa$.     
Then  $\ell^3 (N_1!)(N_2!)$ is 
a $(2, \ell)$-uniform  bound for $K(t)$. 
\end{theorem}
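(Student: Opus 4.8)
The plan is to mimic the structure of the proof of Theorem~\ref{theorem:bad_char}, but in the good-characteristic setting, using the patching philosophy of Harbater--Hartmann--Krashen together with Theorem~\ref{theorem:good_char_dvr}, Theorem~\ref{theorem:good_char_killing_ramification}, and Corollary~\ref{cor:crt}. Let $F$ be a finite extension of $K(t)$ and $\alpha_1,\dots,\alpha_m \in H^2(F,\mu_\ell^{\otimes 2})$. First I would apply Theorem~\ref{theorem:good_char_killing_ramification} to find $f,g,h \in F^*$ so that, replacing $F$ by $F' = F(\sqrt[\ell]{f},\sqrt[\ell]{g},\sqrt[\ell]{h})$ (a degree $\le \ell^3$ extension), each $\alpha_i \otimes_F F'$ becomes unramified at every codimension one point of a chosen regular proper model $\YY$ of $F'$ over $R$. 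So it suffices to produce, after this $\ell^3$-step, a further extension of degree at most $(N_1!)(N_2!)$ that kills all the (now unramified) classes; the degree multiplies to give $\ell^3(N_1!)(N_2!)$.

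So from now on assume each $\alpha_i \in H^2(F,\mu_\ell^{\otimes 2})$ is unramified on a regular proper model $\XX$ of $F$ over $R$, with reduced special fiber $X$. For the generic point $\eta$ of each irreducible component $X_\eta$ of $X$, the residue field $\kappa(\eta)$ is a function field in one variable over $\kappa$, so $N_1$ is a $(2,\ell)$-uniform bound for $\kappa(\eta)$. Since $\alpha_i$ is unramified at $\eta$, it comes from a class $\bar\alpha_i$ over the complete discretely valued field $F_\eta$ with residue field $\kappa(\eta)$; by Theorem~\ref{theorem:good_char_dvr} applied to $F_\eta$ (whose residue field has uniform bound $N_1$), there is an extension of $F_\eta$ of degree at most $\ell N_1$ killing all the $\alpha_i$. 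Spreading this out via \cite[5.8]{hhk:lgp_torsors} (as in the proof of Theorem~\ref{theorem:bad_char}), I obtain a non-empty open $U_\eta \subset X_\eta$ and a splitting field over each $F_{U_\eta}$. The key technical point here is to arrange, using Lemma~\ref{lemma:crt}/Corollary~\ref{cor:crt}, that these splitting fields over the various $F_{U_\eta}$ — and the splitting fields at the finitely many bad closed points $P$ (those in no $U_\eta$) — can all be realized as specializations of a \emph{single} extension $E/F$ whose degree is bounded in terms of $N_1$ and $N_2$ only; the factorials $N_1!$ and $N_2!$ arise precisely because Corollary~\ref{cor:crt} builds $E$ from a monic polynomial of degree a common multiple of the various local degrees, and taking the common multiple to be the relevant factorial absorbs all the possible residue degrees $\le N_1$ (along components) and $\le N_2$ (at closed points, whose residue fields are finite over $\kappa$).

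For the bad closed points $P \in \PP$: the local ring $A_P$ of $\XX$ at $P$ is a two-dimensional complete regular local ring, the class $\alpha_i$ is unramified on it (after the first $\ell^3$-step it is unramified in codimension one, hence by Theorem~\ref{auslander-goldman} unramified on $A_P$), so each $\alpha_i$ lives in $H^2_\et(A_P,\mu_\ell)$ with residue field $\kappa(P)$ finite over $\kappa$; since $N_2$ is a uniform bound for $\kappa$ and hence for $\kappa(P)$, there is a separable extension $\mathfrak{s}(P)$ of $\kappa(P)$ of degree $\le N_2$ killing all $\alpha_i \otimes \kappa(P)$. Now apply Corollary~\ref{cor:crt} to the semilocal ring $A$ of $\XX$ at $\PP$, with $N$ a common multiple (e.g.\ $N_2!$ times something accounting for the components, or more cleanly: bundle the component data and the closed-point data into one application of Lemma~\ref{lemma:crt} over a suitable semilocal ring), to get one extension $E/F$ of controlled degree such that $\alpha_i \otimes_F (E \otimes_F F_P) = 0$ for all bad $P$, while over the good opens $E \otimes_F F_{U_\eta}$ already splits the classes by construction. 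Finally, as in the last paragraph of the proof of Theorem~\ref{theorem:bad_char}, pass to a regular proper model $\YY$ of $L = E\cdot F'$, check that every point $y \in Y$ lies over either a generic point of a component of $X$ (handled by the component-splitting) or a closed point (handled either by a good open $U_\eta$ or by a bad point $P$), conclude $\alpha_i \otimes_F L = 0$ for all $i$ by the local-global principle of Harbater--Hartmann--Krashen for the Brauer group, and bound $[L:F]$.

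The main obstacle, and the place where care is genuinely required, is the bookkeeping in the previous paragraph: making the single field $E$ simultaneously absorb the splitting data at the (infinitely many possible) component generic points and at all the bad closed points with a degree bound depending \emph{only} on $N_1$ and $N_2$ (not on $F$, $\XX$, or the number of points). I expect this forces the slightly wasteful $N_1!\,N_2!$ bound rather than $N_1 N_2$: one applies Lemma~\ref{lemma:crt} with $\mathfrak{s}(\cdot)$ a separable extension of degree dividing $N_1!$ at generic points and dividing $N_2!$ at closed points, and then $N = N_1!\,N_2!$ is a common multiple of all the local degrees. Together with the $\ell^3$ from Theorem~\ref{theorem:good_char_killing_ramification} this yields the asserted bound $\ell^3(N_1!)(N_2!)$.
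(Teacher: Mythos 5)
Your proposal follows the paper's proof essentially step for step: kill ramification with Theorem~\ref{theorem:good_char_killing_ramification} at cost $\ell^3$, then use Lemma~\ref{lemma:crt} and Corollary~\ref{cor:crt} to build single extensions of degree at most $N_1!$ (resp.\ $N_2!$) splitting the now-unramified classes over the components' generic points and the opens $U_\eta$ (resp.\ over $F_P$ for the finitely many bad closed points), and conclude by the Harbater--Hartmann--Krashen local--global argument exactly as in Theorem~\ref{theorem:bad_char}. The one deviation is your appeal to Theorem~\ref{theorem:good_char_dvr} at the generic points, which is both unnecessary and harmful to the bound: it would produce local splitting fields of degree $\ell N_1$ over $F_\eta$ containing a ramified piece $\sqrt[\ell]{\pi}$ that Lemma~\ref{lemma:crt} (which globalizes only separable residue-field extensions) cannot absorb; the paper instead uses that $\alpha_i$ is already unramified at $\eta$ after the first step, descends it to $H^2_{\et}(A_\eta,\mu_\ell^{\otimes 2})$, kills its image over $\kappa(\eta)$ by a separable extension of degree at most $N_1$ (via Lemma~\ref{lemma:sep_ext}), and then globalizes --- which is precisely the mechanism you describe correctly in the very next sentence of your own write-up, so the slip is self-correcting rather than fatal.
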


\begin{proof}  Let $F$ be a finite extension of $K(t)$.  
Let $\alpha_1, \cdots , \alpha_m \in H^2(F, \mu_\ell^{\otimes 2})$. 
 Then, by  (\ref{theorem:good_char_killing_ramification}),  
there exist $f, g, h \in F^*$ such that  $\alpha_i \otimes 
F(\sqrt[\ell]{f}, \sqrt[\ell]{g}, \sqrt[\ell]{h})$ is 
unramified at every codimension one point of any regular proper 
model of $F(\sqrt[\ell]{f}, \sqrt[\ell]{g}, \sqrt[\ell]{h})$ over $R$. 
Let $L  = F(\sqrt[\ell]{f}, \sqrt[\ell]{g}, \sqrt[\ell]{h})$. Let  ${\YY}$ be a 
regular proper model of $L$ over $R$ and $Y$ the reduced special fiber of $\YY$.

Let $\eta$ be   the generic point   of an irreducible component $Y_\eta$ of 
  $Y $.  Let $A_\eta$ be the local ring at $\eta$.  
 Since each  $\alpha_i \otimes_F L$ is unramified at $\eta$, 
there exists $\beta_i \in H^2_\et(A_\eta, \mu_\ell^{\otimes 2})$ such that $\beta_i 
\otimes_{A_\eta} F_\eta = \alpha_i$ for $1 \leq i \leq m$.  Since $\kappa(\eta)$ is a finite 
extension of  $\kappa(t)$ and $N_1$ is a $(2, \ell)$-uniform bund for
$\kappa(t)$, there exists a  
finite  extension $\mathfrak{s}(\eta)$ of degree at most $N_1$ such that 
$\beta_i \otimes_{A_\eta} \mathfrak{s}(\eta) = 0$ for all $i$.     
By (\ref{lemma:sep_ext}), we assume that $\mathfrak{s}(\eta)$ is
separable over $\kappa(\eta)$.  
Let $A$ be the semi-local ring at the generic points of all
irreducible components of  
the special fiber $Y$ of $\YY$.  Then $A$ is a semi-local regular ring with field
of fractions $L$. By (\ref{lemma:crt}), there exists a field extension
$L_1$ of $L$ of degree  
at most $N_1!$ such that for every maximal ideal $m'$ of the integral
closure $B_m$ of $A_m$ 
in $L$, $B_m/m'$ contains a subfield  isomorphic to
$\mathfrak{s}(\eta)$. Hence, by (\ref{cor:crt}),   
$\alpha_i \otimes (L_1 \otimes_{L} L_{\eta}) = \beta_i  \otimes (L_1
\otimes_{L} L_{\eta}) = 0$  
for all $i$.
  By (\cite[5.8]{hhk:lgp_torsors},
\cite[1.17]{kmrt}), 
there exists a non-empty open set $U_\eta$
of the component $Y_\eta$ of the special fiber $Y$,  such that $\alpha_i
\otimes L_1  \otimes L_{{U_\eta}}   = 0$ for all $i$. 

Let $\PP$ be the finite set of closed points of ${\YY}$ which are not in  
$U_\eta$ for any $\eta$.   Let $A_\PP$ be the regular semi-local ring at the closed 
points of $\PP$.  Since each $\alpha_i $ is unramified on $\YY$, there 
exists $\beta_i \in H^2_\et(A_\PP, \mu_\ell^{\otimes 2})$ such that 
$\beta_i \otimes L = \alpha_i \otimes L$. 
Let $P \in \PP$.   Since the residue field $\kappa(P)$ at $P$ is a
finite extension of $\kappa$, by the assumption on $\kappa$, 
there exists an extension $\mathfrak{s}(P)$ of $\kappa(P)$ of degree at most 
$N_2$ such that $\beta_i \otimes \mathfrak{s}(P) = 0$ for all $i$. Once again, by 
(\ref{lemma:sep_ext}), we assume that  each $\mathfrak{s}(P)$ is a separable extension 
of $\kappa(P)$.  Let $L_2$ be as in (\ref{lemma:crt}).
Then, as above, by (\ref{cor:crt}), $\alpha_i \otimes (L_2 \otimes L_{P}) = 0$ for all $i$. 
 
Let $L = LL_1L_2$. 
Then as in (\ref{theorem:bad_char}), $L \otimes \alpha_i = 0$ for all $i$. 
Since $[L : F] \leq \ell^3(N_1!)(N_2!)$, the theorem follows.
\end{proof}

\begin{cor} 
\label{cor:good_char_2}
Let $K$, $\kappa$, $\ell$, $N_1$ and $N_2$ be as in
  (\ref{theorem:good_char}). 
Let $\zeta$ be a primitive  $\ell^{\rm th}$ root
  of unity.  Then $[K(\zeta), K]\ell^3(N_1!)(N_2!)$ is a $(2, \ell)$-uniform
  bound of $K(t)$.
\end{cor}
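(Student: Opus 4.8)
The plan is to follow the reduction used in Corollary~\ref{cor:bad_char_2}: pass to the cyclotomic extension $K' = K(\zeta)$, apply Theorem~\ref{theorem:good_char} over $K'$, and then descend back to $K$. First I would record the basic facts about $K'$. Since $\ell \neq \mathrm{char}(\kappa)$, the extension $K'/K$ is unramified, so $K'$ is again a complete discretely valued field, with residue field $\kappa' = \kappa(\bar\zeta)$ a finite extension of $\kappa$ of degree $[K':K]$; note also that $\kappa'(t)$ is a finite extension of $\kappa(t)$.

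Next I would invoke the remark following the definition of uniform bound, namely that an $(n,\ell)$-uniform bound for a field is also an $(n,\ell)$-uniform bound for every finite extension of it. Applied to the finite extensions $\kappa'/\kappa$ and $\kappa'(t)/\kappa(t)$, this shows that $N_1$ is a $(2,\ell)$-uniform bound for $\kappa'(t)$ and $N_2$ is a $(2,\ell)$-uniform bound for $\kappa'$. Hence the data $K'$, $\kappa'$, $\ell$, $N_1$, $N_2$ satisfy the hypotheses of Theorem~\ref{theorem:good_char}, and that theorem gives that $\ell^3 (N_1!)(N_2!)$ is a $(2,\ell)$-uniform bound for $K'(t)$.

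Finally comes the descent. Let $F$ be a finite extension of $K(t)$ and let $\alpha_1, \dots, \alpha_m \in H^2(F, \mu_\ell^{\otimes 2})$. Set $F' = F(\zeta) = F \cdot K'$; then $F'$ is a finite extension of $K'(t)$ with $[F':F] \leq [K':K]$, since $\zeta$ is a root over $F$ of the minimal polynomial of $\zeta$ over $K$. By the previous paragraph there is an extension $L$ of $F'$ with $[L:F'] \leq \ell^3(N_1!)(N_2!)$ and $\alpha_i \otimes_F L = 0$ for all $i$. Then $[L:F] = [L:F'][F':F] \leq [K(\zeta):K]\,\ell^3(N_1!)(N_2!)$, and since $F$ was an arbitrary finite extension of $K(t)$, the corollary follows.

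There is essentially no serious obstacle here; the only point that requires care — and the sole difference from the bad-characteristic case of Corollary~\ref{cor:bad_char_2}, where $K(\zeta)/K$ is totally ramified and the residue field is unchanged — is that in the present situation $K(\zeta)/K$ is unramified, so the residue field enlarges to $\kappa' = \kappa(\bar\zeta)$. One must therefore explicitly use that $N_1$ and $N_2$ persist as $(2,\ell)$-uniform bounds over $\kappa'(t)$ and $\kappa'$ before Theorem~\ref{theorem:good_char} can be applied over $K'$.
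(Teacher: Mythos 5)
Your proof is correct and is essentially the argument the paper intends: the corollary is stated without proof, but the evident model is the proof of Corollary~\ref{cor:bad_char_2} (pass to $K'=K(\zeta)$, apply the theorem over $K'$, multiply degrees), and that is exactly what you do. You also correctly supply the one detail that genuinely differs from the bad-characteristic case — that $K(\zeta)/K$ is unramified so the residue field enlarges to $\kappa(\bar\zeta)$, and that $N_1$, $N_2$ remain uniform bounds there because uniform bounds persist under finite extensions by definition.
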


The above corollary and (\ref{lemma:uniform_bound_d_d+1}) gives the
following

\begin{cor} 
\label{cor:good_char_n}
Let $K$, $\kappa$, $\ell$, $N_1$ and $N_2$ be as in
 (\ref{theorem:good_char}). 
Let $\zeta$ be a primitive  $\ell^{\rm th}$ root
  of unity.  Then $[K(\zeta), K]\ell^3(N_1!)(N_2!)$ is an $(n, \ell)$-uniform
  bound  of $K(t)$ for all $n \geq 2$.
\end{cor}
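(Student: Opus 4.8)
The plan is to deduce the statement immediately from Corollary~\ref{cor:good_char_2} together with Lemma~\ref{lemma:uniform_bound_d_d+1}. Write $N = [K(\zeta):K]\,\ell^3(N_1!)(N_2!)$. By Corollary~\ref{cor:good_char_2}, $N$ is a $(2,\ell)$-uniform bound for $K(t)$. Lemma~\ref{lemma:uniform_bound_d_d+1} then says that any $(2,\ell)$-uniform bound for a field is automatically a $(d,\ell)$-uniform bound for every $d \geq 2$; applying it with $n = 2$ yields that $N$ is an $(n,\ell)$-uniform bound for $K(t)$ for all $n \geq 2$, with the same constant.

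For completeness I would recall how the passage from degree $2$ to arbitrary $n$ proceeds inside Lemma~\ref{lemma:uniform_bound_d_d+1}, since that is the only non-formal ingredient beyond Corollary~\ref{cor:good_char_2}. By Voevodsky's theorem on the Bloch--Kato conjecture (\cite{voevodsky}), every class in $H^{n+1}(L,\mu_\ell^{\otimes(n+1)})$ is a finite sum of symbols, and each such symbol $\alpha$ can be written $\alpha = \beta\cdot(a)$ with $\beta\in H^n(L,\mu_\ell^{\otimes n})$ a symbol and $a\in L^*$; a finite extension of $L$ that kills all the relevant $\beta_i$ simultaneously kills all the $\alpha_i$, so an $(n,\ell)$-uniform bound is an $(n+1,\ell)$-uniform bound, and one inducts on $n$ starting from $n = 2$. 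Thus no extension beyond the one constructed in Theorem~\ref{theorem:good_char} is ever needed.

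There is no genuine obstacle in this corollary: all the substance is already contained in Theorem~\ref{theorem:good_char} (which rests on the patching techniques of Harbater--Hartmann--Krashen and the ramification-killing Theorem~\ref{theorem:good_char_killing_ramification}) and in the elementary symbol-peeling argument of Lemma~\ref{lemma:uniform_bound_d_d+1}. The only thing worth verifying carefully is the bookkeeping of the constant: the bound depends solely on $\ell$, on $[K(\zeta):K]$, and on the given $(2,\ell)$-uniform bounds $N_1$ for $\kappa(t)$ and $N_2$ for $\kappa$, and it is uniform in $n$, which is precisely what the statement asserts.
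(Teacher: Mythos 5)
Your proposal is correct and follows exactly the paper's own route: the paper derives this corollary immediately from Corollary~\ref{cor:good_char_2} together with Lemma~\ref{lemma:uniform_bound_d_d+1}, precisely as you do. Your recap of the symbol-peeling argument inside Lemma~\ref{lemma:uniform_bound_d_d+1} also matches the paper's proof of that lemma, so nothing further is needed.
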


\section{  Symbol length and $u$-invariant}
 
 \begin{theorem}
\label{theorem:symbol_length1}
 Let $K$ be a complete discretely  valued field with
  residue field $\kappa$. Let $\ell$ be a prime not equal to
  char$(\kappa)$.  Suppose that  there exist integers $N_1$ and $N_2$  such
  that $\kappa(t)$ is  $(2, \ell)$-uniformly bounded by $N_1$and
  $\kappa$ is $(2, \ell)$-uniformly bounded by $N_2$.  Let $n \geq
  2$. Then there
  exists an integer $M_n$ which depends only on $N_1$, $N_2$ and $n$
  such that  for every finite extension $F$ of $K(t)$ and 
    for all $\alpha
  \in H^n(F, \mu_\ell^{\otimes n})$,  $\lambda(\alpha) \leq M_n$.  
\end{theorem}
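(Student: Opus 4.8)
=== PROOF PROPOSAL ===

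\textbf{Approach.} The plan is to reduce the bound on symbol length for $H^n(F,\mu_\ell^{\otimes n})$ to the effective index estimates supplied by the uniform boundedness results of the previous sections, and then invoke Krashen's theorem (\ref{theorem:krashen1}) together with the corollary (\ref{cor:symbol_length}). The key observation is that by Corollaries \ref{cor:good_char_2} and \ref{cor:good_char_n}, the field $K(t)$ admits an explicit $(2,\ell)$-uniform bound $N$, namely $N = [K(\zeta):K]\ell^3(N_1!)(N_2!)$, which depends only on $N_1$, $N_2$ (and $\ell$). Since $N$ is an $(n,\ell)$-uniform bound for $K(t)$ for all $n \ge 2$ by \ref{lemma:uniform_bound_d_d+1}, and since any $(n,\ell)$-uniform bound for $K(t)$ is also one for any finite extension $F$ of $K(t)$ (as noted in the paragraph preceding \ref{lemma:uniform_bound_d_d+1}), the number $N$ is a $(2,\ell)$-uniform bound for every finite extension $F$ of $K(t)$.

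\textbf{Key steps.} First I would fix a finite extension $F$ of $K(t)$ and record that $N := [K(\zeta):K]\ell^3(N_1!)(N_2!)$ is a $(2,\ell)$-uniform bound for $F$; note $[K(\zeta):K] \le \ell-1$ so $N$ is bounded purely in terms of $N_1, N_2, \ell$. Second, I would apply \ref{cor:symbol_length} to $F$ with this bound $N$: for every $n \ge 2$ there exists $N_n$ depending only on $N$ and $n$ (hence only on $N_1, N_2, n$) with $\lambda(\alpha) \le N_n$ for all $\alpha \in H^n(F,\mu_\ell^{\otimes n})$. Third, I would observe that $N_n$ is independent of the particular finite extension $F$ — this is the crucial point, and it follows because the uniform bound $N$ is uniform over all finite extensions of $K(t)$, so the input to \ref{cor:symbol_length} is the same for every $F$. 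Setting $M_n := N_n$ completes the proof.

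\textbf{Main obstacle.} The proof is essentially a bookkeeping assembly of earlier results, so there is no deep obstacle; the one point requiring care is making sure that the bound produced by \ref{cor:symbol_length} genuinely depends only on $N$ and $n$ and not implicitly on $F$. This is guaranteed because \ref{theorem:krashen1} (through \ref{cor:symbol_length}) produces $\lambda(\alpha)$ bounded in terms of $\mathrm{eind}(\alpha)$, $N$, and $n$, and $\mathrm{eind}(\alpha) \le N$ holds uniformly; so once the same $N$ works for all finite extensions $F$ of $K(t)$, the resulting $M_n$ is uniform. I would phrase the proof as: since $N$ (as above) is a $(2,\ell)$-uniform bound for $K(t)$ and hence for every finite extension $F$ of $K(t)$, \ref{cor:symbol_length} applied to each such $F$ yields $\lambda(\alpha) \le M_n$ for $M_n$ depending only on $N$ (equivalently on $N_1, N_2, \ell$) and $n$, for all $\alpha \in H^n(F,\mu_\ell^{\otimes n})$.
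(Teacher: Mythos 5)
Your proof is correct and follows the same route as the paper: invoke Corollary \ref{cor:good_char_2} to get the $(2,\ell)$-uniform bound $N = [K(\zeta):K]\ell^3(N_1!)(N_2!) \leq (\ell-1)\ell^3(N_1!)(N_2!)$ for $K(t)$ and hence for every finite extension $F$, then apply Corollary \ref{cor:symbol_length}. Your extra care about the uniformity of $M_n$ over all $F$ is exactly the point the paper relies on implicitly.
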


\begin{proof} By (\ref{cor:good_char_2}), $K(t)$ is $(2,
  \ell)$-uniformly bounded by $N = (\ell-1)\ell^3(N_1!)(N_2!)$. 
  Hence any finite extension $F$ of $K(t)$ is also $(2, \ell)$-uniformly bounded by 
  $N$. The theorem follows from (\ref{cor:symbol_length}).
 \end{proof}

\begin{theorem} 
\label{theorem:symbol_length2} 
Let $K$ be a complete discretely  valued field with
  residue field $\kappa$. Let $p = $ char$(\kappa)$. Suppose that
  char$(K) = 0$, $p > 0$ and $[\kappa : \kappa^d] = p^d$.  
Then  there exists an integer $M$, which depends only on
  $d$ such that for any finite extension $F$  of
  $K(t)$ and for all $\alpha \in H^n(F, \mu_p^{\otimes n})$,  $n
  \geq 1$,   $\lambda(\alpha) \leq M$.  
\end{theorem}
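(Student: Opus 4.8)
The plan is to combine the bad-characteristic uniform bound of Section~3 with Krashen's symbol-length theorem to bound $\lambda$ on each $H^n$, and then to observe that $H^n(F,\mu_p^{\otimes n})$ vanishes once $n$ exceeds a bound depending only on $d$, so that a single $M$ works for all $n$. First, let $\zeta$ be a primitive $p$-th root of unity. By (\ref{cor:bad_char_2}) the integer $N:=[K(\zeta):K]\,p^{4d+4}$ is a $(2,p)$-uniform bound for $K(t)$; since $\mathrm{char}(K)=0$ we have $[K(\zeta):K]\le p-1$, so $N$ is determined by $d$ (with $p=\mathrm{char}(\kappa)$ fixed). By the remark following the definition of uniform boundedness, the same integer $N$ is a $(2,p)$-uniform bound for every finite extension $F$ of $K(t)$.

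Next, fix $n\ge 2$ and a finite extension $F$ of $K(t)$. Applying (\ref{cor:symbol_length}) to $F$ (with $\ell=p$ and uniform bound $N$) yields an integer $M_n$ depending only on $N$ and $n$, hence only on $d$ and $n$, such that $\lambda(\alpha)\le M_n$ for every $\alpha\in H^n(F,\mu_p^{\otimes n})$; the key point is that $M_n$ does not depend on the particular extension $F$, because $N$ does not. For $n=1$, every class in $H^1(F,\mu_p)=F^*/F^{*p}$ is a single symbol, so $\lambda(\alpha)\le 1$. So far this is the exact analogue, in the present bad-characteristic setting, of (\ref{theorem:symbol_length1}); what remains is to remove the dependence of the bound on $n$.

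This last point is the crux, and it is where finiteness of $[\kappa:\kappa^p]=p^d$ is essential (there is no such input in the good-characteristic case, where $\kappa$ may have infinite $\ell$-cohomological dimension). A complete discretely valued field $K$ with $\mathrm{char}(K)=0$, residue field $\kappa$ of characteristic $p$, and $[\kappa:\kappa^p]=p^d$ has finite $p$-cohomological dimension, $\mathrm{cd}_p(K)\le d+2$: this generalizes $\mathrm{cd}_p(\Q_p)=2$, follows from Kato's description of the Galois cohomology of complete discretely valued fields by a ramification filtration whose graded pieces involve the differential forms $\Omega^j_\kappa$ (which vanish for $j>d$), and the corresponding vanishing of $H^n(-,\mu_p^{\otimes n})$ for function fields of curves over such $K$ is also implicit in the period-index analysis of \cite{ps:period_index}. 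Since $F$ is finitely generated of transcendence degree $1$ over $K$, we get $\mathrm{cd}_p(F)\le\mathrm{cd}_p(K)+1\le d+3$, hence $H^n(F,\mu_p^{\otimes n})=0$ for all $n\ge d+4$ and all finite extensions $F$ of $K(t)$. Pinning down this cohomological-dimension bound is the one input not already assembled in the earlier sections, and is the step I expect to require the most care.

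Finally, set $M:=\max\{1,M_2,M_3,\dots,M_{d+3}\}$, which depends only on $d$. For any finite extension $F$ of $K(t)$, any $n\ge 1$, and any $\alpha\in H^n(F,\mu_p^{\otimes n})$: if $n=1$ then $\lambda(\alpha)\le 1\le M$; if $2\le n\le d+3$ then $\lambda(\alpha)\le M_n\le M$; and if $n\ge d+4$ then $\alpha=0$ and so $\lambda(\alpha)=0$. This proves the theorem.
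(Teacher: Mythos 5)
Your proof is correct and follows essentially the same route as the paper's: the $(2,p)$-uniform bound from (\ref{cor:bad_char_2}), symbol-length control for each fixed $n$ via (\ref{cor:symbol_length}), and the vanishing of $H^n(F,\mu_p^{\otimes n})$ for $n\ge d+4$ from $\mathrm{cd}_p(K)\le d+2$, which the paper simply cites from Gabber--Orgogozo \cite{g-o} rather than deriving from Kato's filtration. Taking the maximum of the finitely many resulting bounds is exactly the paper's concluding step.
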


\begin{proof}  By (\ref{cor:bad_char_2}), $K(t)$ is $(2,
  p)$-uniformly bounded by $(p-1)p^{4d + 4}$. 
  Let $F$ be a finite extension of $K(t)$. Then $F$ is also $(2, p)$-uniformly 
  bounded by $(p-1)p^{4d+4}$. 
  Let $n \geq 1$. 
By   (\ref{cor:symbol_length}),   there exists an integer $N_n$, which
depends only  on $d$ and $n$ such that for   all $\alpha \in H^n(F,   \mu_p^{\otimes n})$,
  $\lambda(\alpha) \leq N_n$.  Since the $p$-cohomological dimension of
  $K$ is at most $d + 2$ (\cite{g-o}) and $F$ is  a function field in
  one variable over $K$, the $p$-cohomological dimension of $F$ is $d +
  3$. Hence $H^n(F, \mu_p^{\otimes n}) = 0$ for all $n \geq
  d+4$. Let $N$ be the maximun of $N_n$ for $2 \leq n \leq d+4$. Then
  $\lambda(\alpha) \leq  N$ for all $\alpha \in H^n(F,
  \mu_p^{\otimes n})$ and $n \geq 2$.
\end{proof}

 \begin{theorem}\label{theorem:u_invariant} 
Let $K$ be complete discretely  valued field with residue field
$\kappa$ and $F$ a function field of a curve over $K$. 
Suppose that char$(K)  = 0$, char$(\kappa) = 2$ and $[\kappa : \kappa^2]$ is finite.  
Then there exists an integer $M$ which depends only on $[\kappa :
\kappa^2]$ such that  for any finite extension $F$ of $K(t)$, $u(F) \leq M$.  
\end{theorem}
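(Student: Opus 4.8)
The plan is to combine the uniform symbol-length bound for $2$-torsion cohomology classes (Theorem~\ref{theorem:symbol_length2}, applied with $\ell = p = 2$) with Krashen's criterion reformulated in Corollary~\ref{cor:uniformbound_uinvariant}. Concretely, set $d$ so that $[\kappa : \kappa^2] = 2^d$. The field $K(t)$, and hence any finite extension $F$ of $K(t)$, is $(2,2)$-uniformly bounded by $N = (2-1)\cdot 2^{4d+4} = 2^{4d+4}$ by Corollary~\ref{cor:bad_char_2}; since every finite extension $L$ of $F$ is again a finite extension of $K(t)$, the same uniform bound $N$ works for all such $L$. This is exactly the hypothesis ``$N$ is a $(2,2)$-uniform bound for $K$'' appearing in Corollary~\ref{cor:uniformbound_uinvariant}, now with $K$ replaced by $F$.

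The second ingredient needed to invoke Corollary~\ref{cor:uniformbound_uinvariant} is the vanishing $H^M(L, \mu_2) = 0$ for all finite extensions $L$ of $F$, for a fixed $M$ depending only on $d$. Here I would use the cohomological dimension bound already recorded inside the proof of Theorem~\ref{theorem:symbol_length2}: the $p$-cohomological dimension of $K$ is at most $d+2$ (citing \cite{g-o}), so the $p$-cohomological dimension of the one-variable function field $F$ over $K$ is at most $d+3$, hence $H^n(F, \mu_2^{\otimes n}) = 0$ for $n \geq d+4$; the same holds for every finite extension $L$ of $F$, since $\mathrm{cd}_p$ does not increase under finite extensions. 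Thus one may take $M = d+4$, which depends only on $[\kappa:\kappa^2]$.

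With both hypotheses of Corollary~\ref{cor:uniformbound_uinvariant} verified for $F$ (applied to the field $F$ in place of $K$, with $N = 2^{4d+4}$ and $M = d+4$), that corollary yields an integer $N'$, depending only on $N$ and $M$ — hence only on $d$, i.e.\ only on $[\kappa : \kappa^2]$ — such that $u(L) \leq N'$ for every finite extension $L$ of $F$. In particular, taking $L = F$ itself, $u(F) \leq N'$. Since $F$ was an arbitrary finite extension of $K(t)$, setting $M := N'$ proves the theorem: $u(F) \leq M$ for all finite extensions $F$ of $K(t)$, with $M$ depending only on $[\kappa:\kappa^2]$.

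The steps here are essentially bookkeeping: the genuine mathematical content has already been established in the uniform-boundedness results of Section~3 (especially Theorem~\ref{theorem:bad_char} / Corollary~\ref{cor:bad_char_2}, which rest on the patching arguments of Harbater--Hartmann--Krashen together with the splitting results \ref{theorem:ps1} and \ref{prop:ps2} from \cite{ps:period_index}) and in Krashen's symbol-length theorem. The only point requiring a little care is ensuring that all constants are tracked as functions of $d$ alone and that the hypotheses of Corollary~\ref{cor:uniformbound_uinvariant} are checked not just for $F$ but for all its finite extensions — which is immediate since every finite extension of $F$ is a finite extension of $K(t)$, so it inherits the same $(2,2)$-uniform bound and the same cohomological-dimension vanishing. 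I expect no real obstacle; the proof is a short deduction from the machinery assembled in the preceding sections.
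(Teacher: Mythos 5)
Your proposal is correct and follows essentially the same route as the paper: the paper's own proof is a one-line deduction from Theorem~\ref{theorem:symbol_length2} and Corollary~\ref{cor:uniformbound_uinvariant}, and your argument simply unpacks the same chain (Corollary~\ref{cor:bad_char_2} for the $(2,2)$-uniform bound, the Gabber--Orgogozo cohomological dimension bound for the vanishing of $H^{d+4}$, then Krashen's criterion via Corollary~\ref{cor:uniformbound_uinvariant}). The bookkeeping of constants as functions of $d$ alone is handled correctly.
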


\begin{proof}   The theorem follows from
  (\ref{theorem:symbol_length2}) and  (\ref{cor:uniformbound_uinvariant}).

\end{proof}

We end with the following

\begin{question} Let $L$ be a field of characteristic not equal to 2
  with $u(L)$ finite.  Is  the Brauer group of $L$ uniformly
  2-bounded?

\end{question}

\newpage
\providecommand{\bysame}{\leavevmode\hbox to3em{\hrulefill}\thinspace}

\end{document}